\theoremstyle{plain}
\newtheorem{lemma}{Lemma}[section] 
\newtheorem{theorem}[lemma]{Theorem}
\newtheorem{corollary}[lemma]{Corollary}
\newtheorem{proposition}[lemma]{Proposition}
\theoremstyle{definition}
\newtheorem{remark}[lemma]{Remark}
\newtheorem{example}[lemma]{Example}
\newtheorem{definition}[lemma]{Definition}
\newcommand{\Zset}{\mathbb Z}
\newcommand{\Rset}{\mathbb R}
\newcommand{\V}{\mathbf{V}}
\newcommand{\N}{\mathbf{N}}
\newcommand{\ann}{\operatorname{ann}}
\newcommand{\red}{\operatorname{red}}
\newcommand{\so}{\mathbf{s}}
\newcommand{\ra}{\mathbf{r}}
\title[Graded irreducible representations: a new type and complete classification]{Graded irreducible representations of Leavitt path algebras: a new type and complete classification}  
\author{Lia Va\v s}
\address{Department of Mathematics, Saint Joseph's University, Philadelphia, PA 19131, USA}
\email{lvas@sju.edu}
\subjclass{16S88, 16G99, 16W50, 16D60} 
\keywords{Leavitt path algebra, irreducible representation, graded ideal, primitive ideal}  
\begin{document}
\begin{abstract} 
We present a new class of graded irreducible representations of a Leavitt path algebra. This class is new in the sense that its representation space is not isomorphic to any of the existing simple Chen modules.
The corresponding graded simple modules complete the list of Chen modules which are graded, creating an exhaustive class: the annihilator of any graded simple module is equal to the annihilator of either a graded Chen module or a module of this new type.  

Our characterization of graded primitive ideals of a Leavitt path algebra in terms of the properties of the underlying graph is the main tool for proving the completeness of such classification. We also point out a problem with the characterization of primitive ideals of a Leavitt path algebra in [K. M. Rangaswamy, Theory of prime ideals of Leavitt path algebras over arbitrary graphs, J. Algebra  375 (2013), 73 -- 90]. 
\end{abstract}

\maketitle

\section{Introduction}

In a series of papers, \cite{Chen}, \cite{Ara_Ranga}, and \cite{Ranga_simple_modules} (listed in chronological order), four classes of irreducible representations of Leavitt path algebras were introduced. The corresponding representation spaces have often been referred to as the Chen modules. The four types of Chen modules are exhaustive in the sense that the annihilator of any simple module over a Leavitt path algebra is equal to the annihilator of a Chen module. 

Leavitt path algebras are naturally graded by the group of integers. Because of the favorable properties of the graded structure (for example, \cite[Theorem 9]{Roozbeh_regular}, \cite[Theorem 5.7]{Tomforde}, and \cite[Theorem 3.3]{Lia_porcupine}), the study of {\em graded} irreducible representations is at least as relevant as the study of irreducible representations. 
One of the first questions that arises is whether the Chen modules which are graded represent a complete list of graded simple modules up to the equality of their annihilators. 

We show that this list is not complete and introduce the missing class of graded simple modules which completes the list. A module in this class is denoted by $\N_c$ and it is defined for a cycle $c$ such that no vertex on $c$ is on another distinct cycle. Such cycles have been called exclusive in \cite{Ara_Ranga} and we adopt this name (in \cite{Ranga_prime}, such cycles are said to be ``without $K$''). A module of the form $\N_c$ is graded simple and  it is not isomorphic to any of the Chen modules (Proposition \ref{proposition_N_c}). We use the term {\em graded Chen modules} to refer to Chen modules which are graded or modules of the form $\N_c.$ In Theorem \ref{graded_simple}, we show that the annihilator of any graded simple module over a Leavitt path algebra is equal to the annihilator of a graded Chen module.  

The proof of Theorem \ref{graded_simple} uses Theorem \ref{graded_primitive_ideal} which characterizes graded primitive ideals of a Leavitt path algebra in terms of the properties of the corresponding admissible pairs (an admissible pair consists of two sets of vertices of the graph which naturally correspond to a graded ideal). 

In Corollary \ref{corollary_primitive_ideal}, we characterize primitive ideals of a Leavitt path algebra and, in section \ref{subsection_Ranga}, we point out an issue in a part of Theorem 4.3 of \cite{Ranga_prime}. Example \ref{example_CSP_not_ISCP} has more details on this matter. 

In section \ref{subsection_RangaRoozbeh_N_vc}, we discuss the definition of the module $\N_{vc}$ from \cite{Roozbeh_Ranga} where $c$ is a cycle without exits and $v$ is a vertex. In section \ref{subsection_RangaRoozbeh_annihilator}, we correct  the formula for the annihilator of $\N_{vc}$ in \cite[Theorem 3.5]{Roozbeh_Ranga}. 

We also point out that parts of this work and parts of \cite{Loc_Van} complement each other. In particular,
our module $\N_c$ can be incorporated in \cite[Proposition 4.21]{Loc_Van} and the proof of \cite[Proposition 4.22]{Loc_Van} can be extended to also include exclusive cycles with exits, not only cycles without exits.

\section{Prerequisites}\label{section_prerequisites}

All rings are associative but not necessarily unital. Modules are left modules unless noted otherwise. A reference to an ideal without left or right specifications indicates that the ideal is two-sided. For a module $M$ over a ring $R,$ we let $\ann_R(M)$ (or $\ann(M)$ if it is clear from the context what $R$ is) denote the annihilator of $M,$ i.e. the set $\{r\in R\mid rm=0$ for all $m\in M\}.$ We recall that $\ann_R(M)$ is an ideal.

\subsection{Graded rings} \label{subsection_graded_rings}
A ring $R$ is {\em graded} by a group $\Gamma$ if $R=\bigoplus_{\gamma\in\Gamma} R_\gamma$ for additive subgroups $R_\gamma$ and $R_\gamma R_\delta\subseteq R_{\gamma\delta}$ for all $\gamma,\delta\in\Gamma.$ The elements of the set $\bigcup_{\gamma\in\Gamma} R_\gamma$ are said to be {\em homogeneous}. The grading is {\em trivial} if $R_\gamma=0$ for every $\gamma\in \Gamma$ which is not the group identity. An $R$-module $M$ is {\em graded} if $M=\bigoplus_{\gamma\in\Gamma} M_\gamma$ for additive subgroups $M_\gamma$ and $R_\gamma M_\delta\subseteq M_{\gamma\delta}$ for all $\gamma,\delta\in\Gamma.$ If $M$ is a graded left $R$-module and $\gamma\in\Gamma,$ the $\gamma$-\emph{shifted} graded module $M(\gamma)$ is defined as the module $M$ with the $\Gamma$-grading given by $M(\gamma)_\delta = M_{\delta\gamma}$ for all $\delta\in \Gamma.$
A left ideal $I$ of a graded ring $R$ is {\em graded} if for any $x\in I$ and any $\gamma\in \Gamma,$ the $\gamma$-component $x_\gamma$ of $x$ is in $I$. Graded right ideals and graded ideals are defined similarly. We adopt the standard definitions of graded ring homomorphisms, graded module homomorphisms, graded fields, and graded algebras as defined in \cite{Roozbeh_book}. 

If $P$ is a property of ring modules, we say that a graded module $M$ has {\em graded property $P$} if a statement analogous to $P$ but considered in the category of graded modules, not the category of modules, holds for $M.$ For example, a module is {\em graded simple} if it does not have any proper and nontrivial {\em graded} submodules. As another example, a graded ideal $I$ is {\em graded prime} if $JK\subseteq I$ implies that $J\subseteq I$ or $K\subseteq I$ for any two {\em graded} ideals $J$ and $K.$ Following the proof of an analogous claim in the non-graded case, it is direct to show that a graded ideal $I$ is graded prime if and only if $aRb\subseteq I$ implies $a\in I$ or $b\in I$ for every two {\em homogeneous} elements $a,b\in R.$

If $P$ is a ring property defined using a module property $Q,$ then the {\em graded property $P$} is obtained from $P$ considering  graded $Q$ instead of $Q.$ For example, a graded ring is said to be {\em graded prime} if $\{0\}$ is a {\em graded} prime ideal. 

If an ideal $I$ of a graded ring $R$ is graded and prime, it is a graded prime ideal. The converse does not hold in general, but it holds if $\Gamma$ is the set of integers $\Zset$ as the following lemma states. 
\begin{lemma} \cite[Proposition II.1.4]{NvO_book}
An ideal $I$ of a $\Zset$-graded ring $R$ is graded prime if and only if it is graded and prime. 
\label{Zset_graded_prime}
\end{lemma}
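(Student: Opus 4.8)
The plan is to prove the two implications separately, the forward one being immediate and the converse being the substantive part. If $I$ is graded and prime, then in particular $JK\subseteq I$ forces $J\subseteq I$ or $K\subseteq I$ for \emph{graded} ideals $J,K$, since this is a special case of the same implication for arbitrary ideals; hence $I$ is graded prime, and this direction uses nothing about $\Gamma=\Zset$.

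For the converse, assume $I$ is graded prime. In particular $I$ is graded, so $R/I$ inherits a $\Zset$-grading with $(R/I)_n=(R_n+I)/I$, and under the correspondence between graded ideals of $R/I$ and graded ideals of $R$ containing $I$, the ideal $\{0\}$ of $R/I$ is graded prime. It therefore suffices to show that a $\Zset$-graded ring $S$ with $\{0\}$ graded prime is a prime ring, and then apply this to $S=R/I$. To prove this, I would use the homogeneous characterization of graded primeness recalled just before the lemma, namely that $aSb=0$ with $a,b$ homogeneous implies $a=0$ or $b=0$. It is enough to show $aSb\neq 0$ for all nonzero $a,b\in S$. Write $a=a_m+\cdots+a_n$ and $b=b_p+\cdots+b_q$ as sums of nonzero homogeneous components, with $a_n$ and $b_q$ the components of top degree; these exist precisely because a $\Zset$-homogeneous element has finite support and $\Zset$ is totally ordered, and this is the only point where the hypothesis on $\Gamma$ enters. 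Since $a_n\neq 0$ and $b_q\neq 0$ are homogeneous and $S$ is graded prime, $a_nSb_q\neq 0$, so there is a homogeneous $r$ of some degree $d$ with $a_nrb_q\neq 0$. Expanding $arb=\sum_{m\le i\le n,\ p\le j\le q}a_irb_j$ and isolating the homogeneous component of degree $n+d+q$, one sees that $i+j\le n+q$ with equality only when $i=n$ and $j=q$, so that component equals $a_nrb_q\neq 0$; hence $arb\neq 0$ and $S$ is prime.

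The step most worth isolating is the extraction of a unique top-degree term of $arb$: this is exactly where the order structure of $\Zset$ is indispensable, and its failure for an arbitrary grading group is why the converse does not hold in general. One can also run the argument directly on $I$ without passing to the quotient, by taking for $a\notin I$ and $b\notin I$ the highest-degree homogeneous components of $a$ and of $b$ that do not lie in $I$ and checking that every other contribution to the corresponding top component of $arb$ lies in the graded ideal $I$; the quotient formulation merely streamlines this bookkeeping. Beyond this, I anticipate no genuine obstacle.
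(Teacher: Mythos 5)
Your proof is correct: the forward direction is the trivial specialization to graded ideals, the reduction to $S=R/I$ is legitimate under the correspondence of (graded) ideals, the extraction of the unique top-degree component of $arb$ is exactly where the $\Zset$-grading (indeed any totally ordered grading group) enters, and the homogeneous-element criterion for graded primeness that you invoke is precisely the one the paper records just before the lemma, while the converse implication you need from elements to ideals ($aSb\neq 0$ for all nonzero $a,b$ forces primeness) holds without any unitality assumption. The paper itself offers no proof, citing \cite[Proposition II.1.4]{NvO_book} instead, and your argument is essentially the standard proof of that cited result, so there is nothing further to compare.
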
 

\subsection{Graded primitive rings}\label{subsection_graded_primitrive_rings}
A $\Gamma$-graded ring is said to be {\em graded left primitive} if there is a graded simple left module $M$ such that $\ann(M)=0.$ A graded ideal $I$ of $R$ is {\em graded left primitive} if $R/I$ is a graded left primitive ring (equivalently, if there is a graded simple left $R$-module with $I$ as its annihilator). Graded right primitive rings and graded right primitive ideals are similarly defined and a graded ring or a graded ideal is graded primitive if it is both graded left and graded right primitive. 

The implication graded left primitive $\Rightarrow$ graded prime holds for graded ideals over locally unital rings  and it can be shown by ``grading'' the well-known argument for left primitive $\Rightarrow$ prime. Theorem \ref{graded_primitive_ideal} and Corollary \ref{corollary_primitive_ideal} enable us to construct quick examples (see the example in the last paragraph of section \ref{section_graded_primitive}) of graded primitive ideals which are not primitive.

We note the following three statements obtained as graded versions of well-known (non-graded) results. The proofs of these statements can be obtained by directly following the proofs of their non-graded versions in papers listed together with the statements themselves.  

\begin{lemma} (The graded version of \cite[Lemmas 1 and 2]{Lanski_Resco_Small})
If $R$ is a graded prime algebra over a graded field $K,$ then there is a graded prime unital $K$-algebra $R^u$ into which $R$ embeds as a graded ideal and such that $R^u$ is graded primitive if and only if $R$ is graded primitive.
\label{lemma_on_unitization}
\end{lemma}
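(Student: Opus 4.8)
The plan is to transcribe the proof of Lemmas~1 and~2 of \cite{Lanski_Resco_Small} into the graded category, inserting the adjective ``graded'' at every occurrence of the corresponding ungraded notion. First I would split off the easy case in which $R$ already has an identity: such an identity is necessarily homogeneous, $R$ is then itself a graded prime unital $K$-algebra, and one takes $R^u=R$, for which both assertions are immediate. So assume $R$ is not unital and let $R^u=R\oplus K$ with multiplication $(r,\alpha)(s,\beta)=(rs+\alpha s+\beta r,\alpha\beta)$, identity $(0,1)$, and grading $(R^u)_\gamma=R_\gamma\oplus K_\gamma$; that this is a graded $K$-algebra uses that the $K$-action on $R$ respects the grading. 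The map $r\mapsto(r,0)$ is then an injective graded $K$-algebra homomorphism onto the graded ideal $R\oplus 0$, and $R\cap K1=0$ because $R$ is not unital, so $R$ genuinely embeds in $R^u$ as a graded ideal. (Alternatively one could pass to a graded multiplier algebra of $R$, treating the unital and non-unital cases at once, but over a graded field the split above is more elementary.)

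The crux --- and the only place where the hypothesis that $K$ is a \emph{graded field} is used --- is to show that $R$ is an \emph{essential} graded ideal of $R^u$, i.e.\ that every nonzero graded ideal of $R^u$ has nonzero intersection with $R$. Given a graded ideal $I$ with $I\cap R=0$, the canonical graded projection $\pi\colon R^u\to R^u/R\cong K$ is injective on $I$, so $\pi(I)$ is a graded ideal of the graded field $K$ and hence equals $0$ or $K$; if $\pi(I)=K$, choosing $x\in I$ with $\pi(x)=1$, writing $x=(e,1)$ with $e\in R$, and multiplying $x$ on either side by elements $(s,0)$ while using $I\cap R=0$ shows that $-e$ is a two-sided identity of $R$, a contradiction. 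Once essentiality is in hand, graded primeness of $R^u$ follows by ``grading'' the standard essential-ideal argument: if $J,L$ are graded ideals of $R^u$ with $JL=0$ and both nonzero, then $J\cap R$ and $L\cap R$ are nonzero graded ideals of $R$ whose product is $0$, contradicting graded primeness of $R$.

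It remains to deduce the primitivity equivalence, for which essentiality of $R$ is again the main tool. If $R^u$ is graded left primitive with faithful graded simple module $M$, then $RM$ is a graded $R^u$-submodule which is nonzero (since $R\neq\ann_{R^u}(M)=0$), hence $RM=M$; for any nonzero graded $R$-submodule $N$ one has $R^uN=M$ by graded simplicity, so $M=RM=R(R^uN)=(RR^u)N\subseteq RN\subseteq N$, whence $M$ is graded simple over $R$ with $\ann_R(M)=\ann_{R^u}(M)\cap R=0$. Conversely, starting from a faithful graded simple left $R$-module $M$ one has $RM=M$, and I would extend the action to $R^u$ by the customary recipe: writing a homogeneous $m$ as $\sum a_im_i$ with $a_i\in R$ and $m_i\in M$ homogeneous of matching degrees, set $(r,\alpha)\cdot m:=\sum\big((r,\alpha)a_i\big)m_i$ (note $(r,\alpha)a_i\in R$) and extend additively. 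Well-definedness follows because $\{x\in M:Rx=0\}$ is a graded $R$-submodule and hence is $0$; a short check then shows that $M$ becomes a graded $R^u$-module whose graded $R^u$-submodules coincide with its graded $R$-submodules, so $M$ is graded simple over $R^u$, and since $\ann_{R^u}(M)\cap R=\ann_R(M)=0$ with $\ann_{R^u}(M)$ a graded ideal, essentiality forces $\ann_{R^u}(M)=0$. The arguments for right modules are word-for-word identical. Thus the only genuinely delicate points are the essentiality argument (where a graded, not an ordinary, field is needed) and the routine verification that the extended $R^u$-action on $M$ is well defined and grading-compatible; everything else is a mechanical rendering of \cite{Lanski_Resco_Small} in the graded category.
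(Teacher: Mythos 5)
Your proposal is correct and follows exactly the route the paper intends: the paper gives no explicit argument but states that the lemma is obtained by ``grading'' the proofs of Lemmas~1 and~2 of \cite{Lanski_Resco_Small}, and your write-up is precisely that transcription (unitization $R^u=R\oplus K$, essentiality of $R$ via the graded ideals of the graded field $K$, and the standard ideal-versus-ring passage for faithful graded simple modules). The delicate points you flag --- essentiality and well-definedness of the extended $R^u$-action using that $\{x\in M \mid Rx=0\}$ is a graded submodule --- are handled correctly, so no gap remains.
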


\begin{lemma} (The graded version of \cite[Lemma 11.28]{Lam_first_course}) A $\Gamma$-graded unital ring $R$ is graded left primitive if and only if there is a proper graded left ideal $I_0$ such that for every nontrivial graded (two-sided) ideal $I$, $I_0+I=R.$
\label{lemma_graded_primitive_characterization}
\end{lemma}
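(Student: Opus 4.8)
The plan is to transcribe the standard proof of \cite[Lemma 11.28]{Lam_first_course} into the graded category, systematically replacing ``left ideal'' by ``graded left ideal'', ``ideal'' by ``graded ideal'', ``maximal left ideal'' by ``maximal graded left ideal'', and ``simple module'' by ``graded simple module'', and checking that each step of the classical argument survives these replacements. The two structural facts that make the transcription work are that homogeneous components of distinct degrees are linearly independent and that the identity $1$ of $R$ is homogeneous (of trivial degree).

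For the forward implication, suppose $R$ is graded left primitive and fix a graded simple left module $M$ with $\ann(M)=0.$ Since $M\neq 0$ it contains a nonzero homogeneous element $m,$ say $m\in M_\delta,$ and I would set $I_0=\ann(m).$ First one checks that $I_0$ is a graded left ideal: if $x=\sum_\gamma x_\gamma\in R,$ then the elements $x_\gamma m\in M_{\gamma\delta}$ lie in distinct homogeneous components, so $xm=0$ forces $x_\gamma m=0$ for every $\gamma.$ As $m\neq0,$ the left ideal $I_0$ is proper, and $Rm$ is a nonzero graded submodule of $M,$ hence $Rm=M$ by graded simplicity. Now let $I$ be any nontrivial graded ideal. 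Then $IM$ is a graded submodule of $M,$ nonzero because $\ann(M)=0,$ so $IM=M;$ moreover $Im$ is a graded submodule with $M=IM=(IR)m\subseteq Im,$ so $Im=M.$ Consequently, for each $r\in R$ there is $i\in I$ with $rm=im,$ whence $r-i\in I_0$ and $r\in I_0+I.$ Therefore $I_0+I=R.$

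For the converse, suppose $I_0$ is a proper graded left ideal with $I_0+I=R$ for every nontrivial graded ideal $I.$ The key step is a Zorn's lemma argument carried out in the graded setting: the set of proper graded left ideals containing $I_0$ is nonempty, and the union of a chain of such ideals is again a proper graded left ideal containing $I_0$ — it stays proper because $1$ is homogeneous, so if $1$ belonged to the union it would already belong to a single member of the chain. Hence there is a maximal graded left ideal $\mathfrak m\supseteq I_0,$ and $M=R/\mathfrak m$ is a graded simple left module, since graded submodules of $M$ correspond to graded left ideals lying between $\mathfrak m$ and $R.$ The annihilator $I:=\ann(M)$ is a graded ideal contained in $\mathfrak m;$ if $I\neq 0,$ then $R=I_0+I\subseteq\mathfrak m,$ a contradiction. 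So $\ann(M)=0$ and $R$ is graded left primitive.

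I do not expect a serious obstacle, as the classical argument goes through almost word for word. The only points that require genuine, if routine, attention are the verifications that $\ann(m)$ (for homogeneous $m$) and $\ann(M)$ are graded left/two-sided ideals, and that a proper graded left ideal extends to a maximal graded left ideal — all of which reduce to separating homogeneous components by degree and to the homogeneity of $1.$
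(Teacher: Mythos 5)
Your proof is correct and follows the same route the paper intends: the paper gives no written argument for this lemma, stating only that one obtains it by directly grading the proof of Lam's Lemma 11.28, which is exactly what you carry out (with $I_0=\ann(m)$ for a homogeneous $m$ playing the role of the maximal left ideal, and the Zorn's lemma step done among graded left ideals). The only cosmetic remark is that properness of the union of a chain of proper graded left ideals needs no homogeneity of $1$ — if $1$ lay in the union it would lie in some member of the chain, which is already impossible since each member is proper.
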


\begin{lemma} (The graded version of \cite[Theorem 1.1]{Fisher_Snider}) Let $R$ be a graded prime ring in which each nontrivial graded left ideal contains a nonzero homogeneous idempotent. If there are nontrivial graded ideals $I_n$ where $n$ is a positive integer such that for every nontrivial graded ideal $I,$ there is $n$ such that $I\cap I_n\neq 0,$ then $R$ is graded primitive. 
\label{lemma_Fisher_Snider}
\end{lemma}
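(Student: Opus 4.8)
Since the statement is the graded counterpart of Theorem 1.1 in \cite{Fisher_Snider}, the plan is to reproduce that proof under the systematic translations of ``ideal'' into ``graded ideal'', of ``left ideal'' into ``graded left ideal'', and of ``idempotent'' into ``homogeneous idempotent'', and then to check that every construction stays inside the category of graded modules. We may assume $R$ is unital: otherwise the non-unital case reduces to the unital one as in \cite{Fisher_Snider}, and in the situation relevant to our applications, where $R$ is an algebra over a graded field, the reduction is supplied by Lemma \ref{lemma_on_unitization}, which preserves both graded primeness and graded primitivity. By Lemma \ref{lemma_graded_primitive_characterization}, it is then enough to produce a proper graded left ideal $I_0$ with $I_0+I=R$ for every nontrivial graded ideal $I$.

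First I would use the family $\{I_n\}$ to turn this into a countable list of demands: as in \cite{Fisher_Snider} the family functions as a countable cofinal set in the lattice of nonzero graded ideals, so it suffices to secure $I_0+I_n=R$ for each $n$. I would then construct $I_0=\bigcup_n L_n$ as the union of an ascending chain $0=L_0\subseteq L_1\subseteq L_2\subseteq\cdots$ of proper graded left ideals in which, at stage $n$, one enlarges $L_{n-1}$ to a proper graded left ideal $L_n$ with $L_n+I_k=R$ for all $k\le n$. Both hypotheses enter this enlargement. Graded primeness makes products of nontrivial graded ideals nontrivial, so intersections such as $I_k\cap I_n\supseteq I_kI_n$ are nontrivial, which provides room to maneuver; and the hypothesis that every nontrivial graded left ideal contains a nonzero homogeneous idempotent $e$ --- necessarily of trivial degree, so that $R=Re\oplus R(1-e)$ is a decomposition of graded left modules --- is what lets us split off a principal graded summand and push $L_{n-1}$ ``in the direction of $I_n$'' while staying proper. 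Since only homogeneous elements are ever adjoined, $I_0$ is graded; since each $L_n$ is proper and the union of an ascending chain of proper left ideals of a unital ring is proper, $I_0\neq R$; and $I_0+I_n=R$ for all $n$ holds by construction, so Lemma \ref{lemma_graded_primitive_characterization} finishes the proof.

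The step I expect to require the most care is, as in \cite{Fisher_Snider}, the single-stage enlargement: one must find a nonzero homogeneous idempotent inside a suitable nontrivial graded ideal built from $I_n$ and $L_{n-1}$, and then carry out the Peirce-corner manipulations that at once force $L_n+I_n=R$ and keep $L_n$ proper. In the graded setting the only additional issue is to confirm that these manipulations never leave the graded category, which is automatic as soon as every idempotent invoked is taken homogeneous --- exactly what the hypothesis guarantees --- but it is the one point where ``grading'' the argument of \cite{Fisher_Snider} has to be verified rather than quoted. Everything else (closure of the lattices of graded ideals and of graded left ideals under intersections and under passage to homogeneous components) is routine.
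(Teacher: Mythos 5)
Your overall plan coincides with the paper's: the paper gives no written argument for this lemma beyond the instruction to follow the proof of \cite{Fisher_Snider} inside the graded category, and your outline (reduce to the unital case, aim for the criterion of Lemma \ref{lemma_graded_primitive_characterization}, grade the Fisher--Snider construction, observing that a nonzero homogeneous idempotent necessarily has degree $0$) is exactly that. Deferring the single-stage enlargement to \cite{Fisher_Snider} is therefore not, by itself, a defect relative to the paper.

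There is, however, a genuine gap in the one substantive inference you do make: the reduction ``it suffices to secure $I_0+I_n=R$ for each $n$,'' justified by declaring $\{I_n\}$ ``a countable cofinal set in the lattice of nonzero graded ideals.'' The stated hypothesis only gives $I\cap I_n\neq 0$ for some $n$, and from $I_0+I_n=R$ together with $I\cap I_n\neq 0$ you cannot conclude $I_0+I=R$; cofinality here must mean containment, $I\supseteq I_n$, which is strictly stronger. In fact the observation you yourself use ($0\neq I I_n\subseteq I\cap I_n$ by graded primeness) shows that the intersection hypothesis as printed is satisfied by \emph{any} countable family of nontrivial graded ideals in a graded prime ring, so no proof can run on it: by Lemma \ref{lemma_nonzero_homogeneous_idempotent} and Theorem \ref{graded_primitive_LPA}, the algebra $L_K(F)$ of the graph $F$ in Example \ref{example_CSP_not_ISCP} is graded prime and every nontrivial graded left ideal contains a nonzero homogeneous idempotent, yet it is not graded primitive. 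What the Fisher--Snider argument needs, and what the application in the proof of Theorem \ref{graded_primitive_LPA} actually supplies (there $I_n=\langle v_n\rangle$ and $v_n\in I$, hence $I_n\subseteq I$), is that every nontrivial graded ideal \emph{contains} some $I_n$. With that reading your skeleton does close: set $J_n=I_1I_2\cdots I_n$ (nontrivial by graded primeness), build a decreasing chain of nonzero homogeneous idempotents $f_n\in J_n$ --- given $f_n$, the graded left ideal $J_{n+1}f_n$ is nonzero by graded primeness, contains a nonzero homogeneous idempotent $g$, and $f_{n+1}=f_ng$ is a nonzero homogeneous idempotent in $J_{n+1}$ with $f_{n+1}f_n=f_nf_{n+1}=f_{n+1}$ --- and take $I_0=\sum_n R(1-f_n)$, which is proper because every element of it right-annihilates $f_N$ for $N$ large, while $1=(1-f_n)+f_n\in I_0+I$ whenever $I\supseteq I_n$. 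So either use the containment form of the hypothesis explicitly, or flag that the hypothesis as printed cannot support your cofinality step. A smaller point: your unitization step needs a sentence checking that both hypotheses pass to $R^u$ of Lemma \ref{lemma_on_unitization} (every nontrivial graded left ideal of $R^u$ meets $R$ nontrivially by graded primeness), or one can simply grade the original argument without assuming a unit, which is how the paper applies the lemma directly to the non-unital $L_K(E)$.
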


\subsection{Graphs and properties of vertex sets}
\label{subsection_graphs} 

Let $E$ be a directed graph, let $E^0$ denote the set of vertices, $E^1$ denote the set of edges, and $\so$ and $\ra$ denote the source and the range maps of $E.$ A {\em sink} of $E$ is a vertex which does not emit edges and an {\em infinite emitter} is a vertex which emits infinitely many edges. A vertex of $E$ is {\em regular} if it is not a sink nor an infinite emitter. 

A {\em path} is a sequence of edges $e_1e_2\ldots e_n$ such that  $\ra(e_i)=\so(e_{i+1})$ for $i=1,\ldots, n-1$ or a single vertex. The set of vertices on a path $p$ is denoted by $p^0.$  The functions $\so$ and $\ra$ extend to paths naturally. A path $p$ is {\em closed}  if $\so(p)=\ra(p).$ A {\em cycle} is a closed path such that different edges in the path have different sources. A cycle has {\em an exit} if a vertex on the cycle emits an edge outside of the cycle. A subset $V$ of $E^0$ has {\em Condition (L)} if every cycle with vertices in $V$ has an exit whose range is in $V$ also. If $V=E^0,$ we say that $E$ has Condition (L). A cycle is {\em exclusive} if ``no exit returns'' or, more formally, no vertex on $c$ is the base of a cycle distinct from $c.$ On the other side of the spectrum are the {\em extreme} cycles: cycles such that ``every exit returns'' i.e., cycles $c$ with exits and such that for every path $p$ with $\so(p)\in c^0,$ there is a path $q$ such that $\ra(p)=\so(q)$ and $\ra(q)\in c^0.$ If $V\subseteq E^0,$ a cycle $c$ is 
{\em extreme in $V$} if $c^0\subseteq V,$ $c$ has an exit with range in $V$, and for every path $p$ with $\so(p)\in c^0$ and $p^0\subseteq V,$ there is a path $q$ such that $q^0\subseteq V,$ $\ra(p)=\so(q),$ and $\ra(q)\in c^0.$

An {\em infinite path} is a sequence of edges $e_1e_2\ldots$ such that $\ra(e_n)=\so(e_{n+1})$ for $n=1,2\ldots.$ If $\so(e_1)=v,$ we identify $\alpha$ and $v\alpha.$ Just as for finite paths, we use $\alpha^0$ for the set of vertices of an infinite path $\alpha.$ 
Two infinite paths $\alpha$ and $\beta$ are {\em tail equivalent} if there are positive integers $m$ and $n$ such that $\alpha$ without the prefix of length $m$ is equal to $\beta$ without the prefix of length $n.$
An infinite path is said to be {\em rational} if it is tail equivalent to a path of the form $cccc\ldots$ where $c$ is a cycle and it is said to be {\em irrational} otherwise. 

If $u,v\in E^0$ are such that there is a path $p$ with $\so(p)=u$ and $\ra(p)=v$, we write $u\geq v.$ Thus, if $p$ is a path $e_1e_2\ldots e_n,$ then the vertices of $p$ are not increasing in the sense that $\so(e_1)\geq \so(e_2)\geq\ldots \geq \so(e_n)\geq \ra(e_n)$ and an analogous statement holds for an infinite path. We say that an infinite path $\alpha$ is {\em strictly decreasing} if there are paths $p_n$ such that $\alpha=p_1p_2\ldots$ and such that $\so(p_1)\gneq \so(p_2)\gneq\ldots.$ A strictly decreasing infinite path $\alpha$ is irrational and $\alpha^0$ is infinite.   

Let $R(v)$ denote the set $\{u\in E^0\mid u\geq v\}$ for $v\in E^0$ and $R(V)$ denote the set  $\{u\in E^0\mid u\geq v$ for some $v\in V\}$ for $V\subseteq E^0.$ We call the set $R(V)$ {\em the root} of $V.$ We chose this name since the ``dual'' set $T(V)=\{u\in E^0\mid u\leq v$ for some $v\in V\}$ is called {\em the tree} of $V$. In \cite{Ranga_prime}, $R(V)$ is denoted by $M(V).$

A subset $H$ of $E^0$ is {\em hereditary} if $u\in H$ and $u\in R(v)$ implies $v\in H.$ The set $H$ is {\em saturated} if $v\in H$ for any regular vertex $v$ such that $\ra(\so^{-1}(v))\subseteq H.$ It is direct to check that $E^0-R(V)$ is hereditary and saturated for any $V\subseteq E^0.$   

A set $V$ of vertices is said to be {\em downwards directed} if for every two vertices $u,v\in V,$ there is $w\in V$ such that $u\geq w$ and $v\geq w.$ In \cite{Gene_Bell_Ranga}, the following concept was introduced: a set $V$ of vertices has {\em the Countable Separation Property} (CSP for short) if there is a countable set $C\subseteq E^0$ such that $V\subseteq R(C).$ We introduce a more restrictive notion. 
\begin{definition}
A set $V$ of vertices has the {\em Inner Countable Separation Property } (ICSP) if there is a countable set $C\subseteq V$ such that $V\subseteq R(C).$
\end{definition}
If $V=E^0,$ CSP and ICSP are equivalent. As Example \ref{example_CSP_not_ISCP} shows,  CSP $\nRightarrow$ ICSP.

\subsection{Leavitt path algebras, their graded ideals and quotients}
\label{subsection_LPAs} 
If $K$ is any field, the {\em path algebra} $P_K(E)$ of $E$ over $K$ is a free $K$-algebra generated by the set $E^0\cup E^1$ such that

\begin{tabular}{l}
(V) $vw =0$ if $v\neq w$ and $vv=v$ for $v,w\in E^0$\\
(E) $\so(e)e=e\ra(e)=e$ for $e\in E^1.$
\end{tabular}

The {\em extended graph} $\widehat{E}$ of a graph $E$ is the graph with vertices $E^0,$ edges $E^1\cup \{e^*\mid e\in E^1\},$ and with the range and the source maps the same on $E^1$ and given by $\so(e^*)=\ra(e)$ and $\ra(e^*)=\so(e)$ on the added edges. The \emph{Leavitt path algebra} $L_K(E)$ of $E$ over $K$ is the path algebra over the extended graph subject to the relations 

\begin{tabular}{l}
(CK1) $e^\ast f=0$ if $e\neq f$ and $e^\ast e=\ra(e)$ for edges $e,f$ and\\
(CK2) $v=\sum_{e\in \so^{-1}(v)} ee^\ast$ for each regular vertex $v.$\\
\end{tabular}

The axioms ensure that every element of $L_K(E)$ can be represented as a sum of the form $\sum_{i=1}^n k_ip_iq_i^\ast$ for some $n$, paths $p_i$ and $q_i$, and elements $k_i\in K,$ for $i=1,\ldots,n$  where $v^*=v$ for $v\in E^0$ and $p^*=e_n^*\ldots e_1^*$ for a path $p=e_1\ldots e_n.$ Using this representation, it is direct to see that $L_K(E)$ is locally unital (with the finite sums of vertices as the local units) and that $L_K(E)$ is unital if and only if $E^0$ is finite in which case the sum of all vertices is the identity. For more details on these basic properties, see \cite{LPA_book}.

If we consider $K$ to be trivially graded by $\Zset,$ $L_K(E)$ is naturally graded by $\Zset$ so that the $n$-component $L_K(E)_n$ is the $K$-linear span of the elements $pq^\ast$ for paths $p, q$ with $|p|-|q|=n$ where $|p|$ denotes the length of a path $p.$ While one can grade a Leavitt path algebra by any group (see \cite[Section 1.6.1]{Roozbeh_book}), we always consider the natural grading by $\Zset.$ 

If $H\subseteq E^0$ is hereditary and saturated, let 
$B_H$ be the set of infinite emitters $v$ in $E^0-H$ such that $\so^{-1}(v)\cap \ra^{-1}(E^0-H)$  is nonempty and finite. For such $v\in B_H,$ we let 
$v^H=v-\sum ee^*$ where the sum is taken over $e\in \so^{-1}(v)\cap \ra^{-1}(E^0-H).$

An {\em admissible pair} is a pair $(H, S)$ where $H\subseteq E^0$ is hereditary and saturated and $S\subseteq B_H.$ For such a pair, let $I(H,S)$ denote the graded ideal generated by homogeneous elements $H\cup \{v^H \mid  v\in S \}.$ 
The ideal $I(H,S)$ is the $K$-linear span of the elements $pq^*$ for paths $p,q$ with $\ra(p)=\ra(q)\in H$ and the elements $pv^Hq^*$ for paths $p,q$ with $\ra(p)=\ra(q)=v\in S$ (see \cite[Lemma 5.6]{Tomforde}). The converse holds as well: for every graded ideal $I$, the vertices in $I$ form a hereditary and saturated set $H$ and the set of infinite emitters such that $v^H\in I$ is a subset of $B_H$ (\cite[Theorem 5.7]{Tomforde}, also \cite[Theorem 2.5.8]{LPA_book}). Each admissible pair $(H,S)$ gives rise to the quotient graph $E/(H,S),$ defined as below.
\[
\begin{array}{l}
(E/(H,S))^0=E^0-H \cup\{v'\mid v\in B_H-S\} \\
(E/(H,S))^1=\{e\in E^1\mid \ra(e)\notin H\}\cup\{e'\mid e\in E^1\mbox{ and }\ra(e)\in B_H-S\} 
\end{array}
\]
with $\so$ and $\ra$ the same as on $E^1$ for $e\in E^1 \cap (E/(H,S))^1$ and $\so(e')=\so(e),$ $\ra(e')=\ra(e)'$ for $e'\in (E/(H,S))^1.$ The  quotient algebra $L_K(E)/I(H,S)$ is graded isomorphic to $L_K(E/(H,S))$ (see \cite[Theorem 5.7]{Tomforde}).

\subsection{Branching systems and classes of simple modules of Leavitt path algebras}\label{subsection_list_of_Chen_modules}

In \cite{Goncalves_Royer}, the study of representations of Leavitt path algebras using branching systems has been initiated. A {\em branching system} of a graph $E$ is a set $X$ with its subsets $X_v$ for $v\in E^0$ and $X_e$ for $e\in E^1,$ and maps $\sigma_e:  X_{\ra(e)}\to X_e$  for $e\in E^1$ such that the following holds. 
\begin{enumerate}
\item The sets $X_v, v\in E^0,$ are mutually disjoint and the sets $X_e, e\in E^1,$ are mutually disjoint. 
\item $X_e\subseteq X_{\so(e)}$ for every $e\in E^1.$ 
\item The map $\sigma_e$ is a bijection for every $e\in E^1.$  
\item $X_v=\bigcup_{e\in \so^{-1}(v)} X_{e}$ for each regular vertex $v.$ 
\end{enumerate}
If (4) holds also for infinite emitters, $X$ is {\em perfect}. If $X=\bigcup_{v\in E^0} X_v,$ $X$ is {\em saturated}. 

For a branching system $X$, one defines a $L_K(E)$-module $M(X)$ as follows: $M(X)$ is the vector space over $K$ with basis $X$ and with the $L_K(E)$-module structure induced by  
\begin{center}
\begin{tabular}{lllll}

$v\cdot x=x$ & if $x\in X_v$ & and & $v\cdot x=0$ & otherwise, \\
$e\cdot x=\sigma_e(x)$ & if $x\in X_{\ra(e)}$ & and &$e\cdot x=0$ & otherwise, and \\
$e^*\cdot x=\sigma_e^{-1}(x)$&  if $x\in X_e$& and & $e^*\cdot x=0$ & otherwise
\end{tabular}
\end{center}
for $v\in E^0, e\in E^1,$ and $x\in X.$   

In \cite[Definition 3.2]{Roozbeh_Ranga}, a branching system is said to be {\em graded} if there is a map $\deg: X\to \Zset$ such that $\deg(\sigma_e(x))=\deg(x)+1.$ By \cite[Section 3]{Roozbeh_Ranga}, if $X$ is graded, then $M(X)$ is graded where $M(X)_n$ for $n\in \Zset$ is the $K$-linear span of elements $x\in X$ with $\deg(x)=n.$ 

In \cite{Chen}, the first three of the following classes of irreducible representations or, equivalently, simple modules listed below were introduced. In \cite{Ara_Ranga} and \cite{Ranga_simple_modules}, some of these classes were further refined and class (4) was introduced. 
\begin{enumerate}
\item The infinite-path type. If $\alpha$ is an infinite path, let $X$ be the set of all infinite paths tail equivalent to $\alpha.$ If we let $X_v$ be $\{\beta\in X\mid \so(\beta)=v\},$ $X_e$ be $\{\beta\in X\mid e$ is the first edge of $\beta\},$ and $\sigma_e: X_{\ra(e)}\to X_e$ be given by $\beta\mapsto e\beta,$ then $X$ is a perfect and saturated branching system. The module $\V_{[\alpha]}=M(X)$ is simple. By \cite[Proposition 3.6]{Roozbeh_Ranga}, $\V_{[\alpha]}$ is graded if and only if $\alpha$ is irrational (so it is graded simple in this case). By \cite[Lemma 3.2]{Ara_Ranga}, if $\alpha$ is either irrational or rational but not tail equivalent to an exclusive cycle, then $\ann(\V_{[\alpha]})=I(H, B_H)$ for $H=E^0-R(\alpha^0)$ and if $\alpha$ is tail equivalent to $ccc\ldots$ for some 
exclusive cycle $c,$ then $\ann(\V_{[\alpha]})=I(H, B_H)+\langle c-v\rangle$ for $H=E^0-R(\alpha^0)$ and $v\in c^0.$
  
\item The twisted-$\V$ type. This type consists of modules denoted by $\V^f_{[\alpha]}$ where $\alpha$ is an infinite rational path and $f$ an irreducible polynomial over $K[x,x^{-1}].$ Since such a module is non-graded, it is not in our focus. We refer a reader to \cite[Section 3]{Ara_Ranga} for more details and exact definition. By \cite{Ara_Ranga}, $\V^f_{[\alpha]}$ is simple and $\ann(\V^f_{[\alpha]})=I(H, B_H)+\langle f(c)\rangle$ for $H=E^0-R(\alpha^0).$

\item The sink type. Let $v$ be a sink and $X$ be the set of all paths in the path algebra which end at $v$. If we let $X_w$ be $\{p\in X\mid \so(p)=w\},$ $X_e$ be $\{p\in X\mid p$ has $e$ as its first edge$\},$ and $\sigma_e: X_{\ra(e)}\to X_e$ be given by $p \mapsto ep,$ then $X$ is a perfect and saturated branching system. The module $\N_v=M(X)$ is simple, graded (thus graded simple) and, by \cite[Lemma 3.1]{Ara_Ranga}, $\ann(\N_v)=I(H, B_H)$ for $H=E^0-R(v).$

\item The infinite-emitter types. Let $v$ be an infinite emitter and $X,$ $X_w, X_e$ and $\sigma_e$ be defined as for the previous type. Then $X$ is a saturated branching system which is not perfect since $v\in X_v$ but $v\notin X_e$ for any $e\in E^1.$ Indeed, even in the case when $e\in \so^{-1}(v)\cap \ra^{-1}(R(v)),$ the elements $v$ and $ep$ are different basis elements of $M(X)$ for any $p\in X$ such that $\ra(e)=\so(p).$ The module $\N_v=M(X)$ is simple and graded.
To distinguish the cases when $\ra(\so^{-1}(v))\cap R(v)$  is empty, finite and nonempty, and infinite, we use different notations for $\N_v$ in these different cases. 
\begin{enumerate}
\item[] $\N_{v\emptyset}$ denotes $\N_v$ in the case when   $\ra(\so^{-1}(v))\cap R(v)$ is empty. 

\item[] $\N_{v\in B_H}$ denotes $\N_v$ in the case when $\ra(\so^{-1}(v))\cap R(v)$ is nonempty and finite. 

\item[] $\N_{v\infty}$  denotes $\N_v$ in the case when  $\ra(\so^{-1}(v))\cap R(v)$ is infinite. 
\end{enumerate} 
The annihilators of all three types are described in \cite[Proposition 2.4]{Ranga_simple_modules}: in the first and the third case, the annihilator is $I(H, B_H).$ In the second case, it is  $I(H, B_H-\{v\}).$
\end{enumerate}

We say that a module is a {\em Chen module} if it is one of the modules of types (1) to (4). 

While Chen modules of different types are not isomorphic, they can have the same annihilators. 

\begin{example}
Let $E$ be the graph consisting of a single vertex $v$ and infinitely many edges $e_n.$ For the infinite path $\alpha=e_1e_2\ldots,$ we have that $\ann(\V_{[\alpha]})=0=I(\emptyset, \emptyset)=\ann(\N_{v\infty}).$ The modules $\V_{[\alpha]}$ and $\N_{v\infty}$ are not isomorphic by \cite[Proposition 2.8]{Ranga_simple_modules}.  
\label{example_more_than_one_Chen} 
\end{example}

\section{Characterization of graded primitive ideals}\label{section_graded_primitive}

In the rest of the paper, $K$ is a field, $E$ is a graph, and $L_K(E)$ is the corresponding Leavitt path algebra. 

By \cite[Proposition 4.6]{Gene_Bell_Ranga}, if $L_K(E)$ is primitive, then $E^0$ has CSP. In Proposition \ref{proposition_sufficient}, we prove that the same statement holds if ``primitive'' is replaced by ``graded primitive''. This also follows from recently shown \cite[Theorem 2.11]{Ranga_graded_primitive}. As the argument in our proof is different, we include the proof also.  

The proof of \cite[Proposition 4.6]{Gene_Bell_Ranga} uses \cite[Proposition 4.4 and Lemma 4.5]{Gene_Bell_Ranga}.  Proposition 4.4 from \cite{Gene_Bell_Ranga} states that if $S\subseteq E^0$ does not have CSP, then there is $v\in S$ such that the set $Z\subseteq S,$ defined as in the proof below, does not have CSP. Since this statement makes no reference to either primitivity or graded primitivity, we can use it directly. Lemma 4.5 from \cite{Gene_Bell_Ranga} is about a certain ideal generated by a vertex. As such ideal is graded, we can use this lemma directly also.

\begin{proposition}
If $L_K(E)$ is graded primitive, then $E^0$ has CSP.  
\label{proposition_sufficient}
\end{proposition}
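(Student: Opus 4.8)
The plan is to follow the strategy of \cite[Proposition 4.6]{Gene_Bell_Ranga} but replace the non-graded ingredients with their graded analogues, which are available because the relevant ideals are graded. So, assume $L_K(E)$ is graded primitive and, arguing by contradiction, suppose that $E^0$ does not have CSP. Let $M$ be a graded simple left $L_K(E)$-module with $\ann(M)=0.$ Fix any nonzero homogeneous $x\in M$ and set $S$ to be the (hereditary, or at least root-closed) set of vertices $v$ with $vx\neq 0$; more precisely, since $M$ is graded simple and generated by $x,$ every element of $M$ is obtained by acting on $x,$ so $M = L_K(E)x$ and one checks that the support of $M$ among vertices is contained in (and essentially equal to) $R(S)$ for a suitable set $S$ of vertices through which $x$ ``passes''. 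Because $E^0$ fails CSP, one shows $S$ fails CSP as well (if $S\subseteq R(C)$ for countable $C$, then, after enlarging $C$ by the finitely many vertices used in writing $x,$ one would separate all of $R(S) \supseteq E^0$-relevant vertices, contradicting the failure of CSP for $E^0$; this is exactly where \cite[Proposition 4.4]{Gene_Bell_Ranga} is invoked to pass to a single vertex $v\in S$ whose ``descendant'' set $Z$ also fails CSP).

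Next I would invoke \cite[Proposition 4.4]{Gene_Bell_Ranga} verbatim — it makes no reference to primitivity — to obtain a vertex $v\in S$ such that the set $Z\subseteq S$ defined there (roughly, the set of vertices reachable from $v$ along the relevant paths) does not have CSP. Then I would use \cite[Lemma 4.5]{Gene_Bell_Ranga}, which concerns the ideal generated by the single vertex $v$; since an ideal generated by a vertex is graded, this lemma applies without modification. The lemma should produce, from the failure of CSP for $Z,$ a countable family of nonzero graded ideals $J_n$ (or graded left ideals) each of which kills $vx,$ equivalently $J_n x = 0$ after localizing at $v,$ together with the property that any nonzero graded ideal $I$ meets some $J_n.$ The point is that $vx$ generates a graded submodule of $M,$ which by graded simplicity is all of $M,$ so $J_n x = 0$ forces $J_n \subseteq \ann(M) = 0,$ a contradiction once we know each $J_n \neq 0.$ Spelling this out: for each $n,$ $J_n$ annihilates $vx$; but $L_K(E)\,vx = M$ since $M$ is graded simple and $vx\neq 0$ (choosing $v$ so that $vx\neq 0$); hence $J_n = J_n L_K(E) \,vx = J_n \cdot M$... rather, $J_n M = J_n L_K(E) vx = L_K(E) J_n vx = 0$ using that $J_n$ is an ideal, so $J_n \subseteq \ann(M) = 0.$

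The main obstacle, and the step requiring the most care, is the bookkeeping that translates ``$E^0$ fails CSP'' into ``there is a homogeneous $x\in M$ and a vertex $v$ such that the failure of CSP for $Z(v)$ yields a nonzero ideal annihilating $vx$.'' In the non-graded proof this is where \cite[Lemma 4.5]{Gene_Bell_Ranga} does the work, and the subtlety is ensuring that the ideals it produces are nonzero and that we may take $x$ (and hence $vx$) homogeneous — the latter is automatic because in a $\Zset$-graded module that is graded simple, it is generated by any one of its nonzero homogeneous components' elements, and $M = L_K(E)x$ for such an $x.$ One must also confirm that $\ann_{L_K(E)}(M) = 0$ together with $L_K(E)$ being locally unital is enough to run the argument (it is: $J_n M = 0 \Rightarrow J_n \subseteq \ann(M)$ needs no unitality). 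I would therefore organize the proof as: (i) reduce to a homogeneous generator $x$ with $M = L_K(E)x$; (ii) extract $v$ and $Z$ from \cite[Proposition 4.4]{Gene_Bell_Ranga} using the failure of CSP; (iii) apply \cite[Lemma 4.5]{Gene_Bell_Ranga} to the graded ideal $\langle v\rangle$ to get nonzero graded ideals $J_n$ with $J_n vx = 0$; (iv) conclude $J_n \subseteq \ann(M)=0,$ contradicting $J_n\neq 0.$ The only genuinely new remark relative to \cite{Gene_Bell_Ranga} is the observation that every ideal appearing is graded, so no graded-versus-ungraded gap opens up.
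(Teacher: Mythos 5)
There is a genuine gap: your proposal guesses at the content of the cited results and, in doing so, omits the construction that actually drives the proof. Proposition 4.4 and Lemma 4.5 of \cite{Gene_Bell_Ranga} are not statements about a faithful module; they operate on specific elements $x_v$, one for each vertex $v$, on sets $S_n\subseteq E^0$ built from the expressions of these elements (this is what Lemma 4.5, which describes the ideal generated by a vertex, is used for), and on $Z=\{w\in S_n\mid x_vx_w=0\}$. The $x_v$ come from a comaximality characterization of primitivity: one first uses graded primeness to embed $L_K(E)$ as a graded ideal in a unital graded primitive algebra $A$ (the graded Lanski--Resco--Small Lemma \ref{lemma_on_unitization}), then applies the graded version of Lam's criterion (Lemma \ref{lemma_graded_primitive_characterization}) to obtain a proper graded left ideal $I$ with $I+J=A$ for every nontrivial graded ideal $J$, and for each vertex $v$ chooses $x_v\in AvA$ with $1+x_v\in I$. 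The contradiction is then $x_v=x_v(1+x_w)\in I$, hence $1\in I$, with no module in sight. None of this machinery is present in your sketch: you never construct the $x_v$, so you cannot invoke Proposition 4.4 ``verbatim'' (its set $Z$ is defined through the $x_v$), and Lemma 4.5 of \cite{Gene_Bell_Ranga} does not produce ``nonzero graded ideals $J_n$ killing $vx$'' --- that step, which is where your argument would have to happen, is asserted rather than proved.

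The direct module-theoretic route also breaks down on its own terms. If $x$ is a nonzero homogeneous element of the faithful graded simple module $M$, then $x=(\sum_{u\in F}u)x$ for a finite set $F$ of vertices, and faithfulness gives, for each $w\in E^0$, a monomial $pq^*$ with $wpq^*x\neq 0$; but this only yields $\so(q)\in F$, $\ra(q)\leq\so(q)$, and $w\geq\ra(q)$, i.e. $E^0\subseteq R(T(F))$ where $T(F)$ is the tree of $F$ --- a set that may be uncountable. So the vertex support of $M$ is controlled by the root of a tree, not by $R(S)$ as you claim, and no countable separating set falls out; this is precisely why the paper (following \cite{Gene_Bell_Ranga}) argues through the unitization and the comaximal left ideal rather than through annihilators of module elements. (Your final bookkeeping is salvageable once the $J_n$ are two-sided ideals, since $J_nM=J_nL_K(E)vx\subseteq J_nvx$; the identity $J_nL_K(E)vx=L_K(E)J_nvx$ you wrote is not valid, but the real problem is that the ideals $J_n$ are never obtained.)
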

\begin{proof}
If $L_K(E)$ is graded primitive, then it is graded prime. 
By Lemma \ref{lemma_on_unitization}, there is a graded primitive unital algebra $A$ for which $L_K(E)$ is a graded ideal. By Lemma \ref{lemma_graded_primitive_characterization}, there is a proper graded left ideal $I$ such that $I+J=A$ for every nontrivial graded ideal $J$. Thus, for any vertex $v\in E^0,$ there is an element $x_v\in AvA$ such that $1+x_v\in I.$

By \cite[Lemma 4.5]{Gene_Bell_Ranga} and the proof of \cite[Proposition 4.6]{Gene_Bell_Ranga}, there is a sequence of subsets $S_n$ of $E^0$ with $\bigcup_n S_n=E^0.$ Assuming that $E^0$ does not have CSP, there is $n$ such that $S_n$ does not have CSP and, by \cite[Proposition 4.4]{Gene_Bell_Ranga}, a vertex $v\in S_n$ such that the set $Z=\{w \in S_n\mid x_v x_w = 0\}$ also does not have CSP. In particular, $Z\neq \emptyset,$ so let $w\in Z.$ Since $1+x_v, 1+x_w\in I,$ we have that 
$x_v=x_v+0=x_v+x_vx_w=x_v(1+x_w)\in I.$ Hence, $1=1+x_v-x_v\in I$ which a contradiction with the assumption that $I$ is proper. 
\end{proof}

This proposition enables us to prove direction $\Rightarrow$ in Theorem \ref{graded_primitive_LPA}. For the converse, we show Lemma \ref{lemma_nonzero_homogeneous_idempotent} which enables us to use Lemma \ref{lemma_Fisher_Snider}. Lemma \ref{lemma_nonzero_homogeneous_idempotent} is known to hold for a graded (two-sided) ideal for which the nonzero homogeneous idempotent from the statement of Lemma \ref{lemma_nonzero_homogeneous_idempotent} can be taken to be a vertex (see \cite[Lemma 4.1]{Tomforde}). 

\begin{lemma}
If $I$ is a nontrivial graded left ideal of $L_K(E),$ then $I$ contains a nonzero homogeneous idempotent.
\label{lemma_nonzero_homogeneous_idempotent}
\end{lemma}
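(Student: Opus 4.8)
The plan is to reduce the problem to the known case of graded two-sided ideals by exhibiting, inside the nontrivial graded left ideal $I$, a nonzero element whose "tail" is controlled, and then to produce an idempotent from it. Concretely, pick a nonzero homogeneous element $a \in I$, say $a \in I_n$. Write $a = \sum_{i=1}^m k_i p_i q_i^*$ in the standard normal form with $|p_i| - |q_i| = n$ for all $i$. The first step is to multiply $a$ on the right by a suitable monomial $q_j$ (one of the $q$'s appearing, chosen to be of maximal length among those with a fixed range, say) so that, after applying (CK1) to collapse the terms $q_i^* q_j$, the resulting element $a q_j \in I$ is a $K$-linear combination of terms of the form $p q^*$ with $q$ a \emph{vertex} or a \emph{proper suffix}-free configuration — ideally I want to arrange that $a q_j = b$ where $b = \sum k_i' r_i$ has all the $q$-parts equal to a single vertex $v = \ra(q_j)$, i.e. $b \in L_K(E) v$ and $b \neq 0$. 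This is the standard "reduction" trick used throughout the Leavitt path algebra literature for showing that any nonzero left ideal meets a corner $v L_K(E) v$; here I must be careful to keep track of homogeneity (everything stays homogeneous) and to ensure the reduction does not kill the element, which is where the choice of a maximal $q_j$ matters.

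Having obtained a nonzero homogeneous $b \in I$ with $b = bv$ for some vertex $v$, the second step is to pass to the corner algebra $v L_K(E) v$: since $b v = b \neq 0$, there is a homogeneous element $c \in L_K(E)$ with $v c v b \neq 0$ (take $c$ so that $c b$ has a nonzero component at $v$ on the left, using that the vertices are linearly independent and act as orthogonal idempotents), hence $d := v c v b \in I$ is a nonzero homogeneous element of degree $\deg(c) + n$ lying in $v L_K(E) v$, which can be further right-adjusted to land in $v L_K(E) v$ on both sides. Now $v L_K(E) v$ is a unital $\Zset$-graded $K$-algebra (with identity $v$), and it is a classical fact — provable directly, or by invoking that $L_K(E)$ is graded von Neumann regular, \cite[Theorem 9]{Roozbeh_regular}, which is cited in the introduction — that in a graded von Neumann regular ring every nonzero homogeneous element $d$ satisfies $d = d e d$ for some homogeneous idempotent... but more simply: by graded von Neumann regularity there is a homogeneous $d'$ with $d d' d = d$, so $e := d' d$ is a nonzero homogeneous idempotent, and $e = d' d = d' (v c v b) \in I$ since $b \in I$ and $I$ is a left ideal. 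That $e \neq 0$ follows from $d e = d d' d = d \neq 0$.

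So the overall chain is: nonzero homogeneous $a \in I$ $\rightsquigarrow$ nonzero homogeneous $b \in I$ with $b = bv$ $\rightsquigarrow$ nonzero homogeneous $d \in I \cap vL_K(E)v$ $\rightsquigarrow$ homogeneous $d' $ with $d d' d = d$ (graded regularity of $L_K(E)$, or of the unital corner $v L_K(E) v$) $\rightsquigarrow$ $e = d' d$, a nonzero homogeneous idempotent in $I$. The main obstacle I expect is the very first reduction step: showing that one can right-multiply a general homogeneous element of the normal form $\sum k_i p_i q_i^*$ by a monomial so as to reach a nonzero element of the form $bv$ with $b \in L_K(E)v$. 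One must order the $q_i^*$ by length, multiply by the longest $q_i$ with a given range, apply (CK1) carefully so that the longest terms collapse to $p_i \ra(q_i) = p_i$ while no cancellation wipes out the whole element — this requires a small induction on $\max_i |q_i|$, and a careful bookkeeping argument that after the multiplication the surviving leading terms are $K$-linearly independent (they are distinct monomials $p_i$ of possibly differing lengths, all ending at $v$), hence the product is genuinely nonzero. Once that is in hand, the remaining steps are routine given the results already available in the excerpt.
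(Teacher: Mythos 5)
Your closing step is the real proof, and it is a genuinely different route from the paper's: once you invoke graded von Neumann regularity of $L_K(E)$ (\cite[Theorem 9]{Roozbeh_regular}), you can apply it \emph{directly} to the nonzero homogeneous $a\in I$ you started with. Indeed, if $a$ has degree $n$ and $aa'a=a$, then comparing degree components shows the degree $-n$ component of $a'$ also works, so $a'$ may be taken homogeneous; then $e=a'a$ is a homogeneous idempotent, $e\in I$ because $I$ is a left ideal, and $e\neq 0$ because $ae=a\neq 0$. This makes your two reduction steps entirely redundant, which is fortunate, because as written they both have gaps. In step 1, right-multiplying by a maximal-length $q_j$ need not produce a nonzero element: after collapsing via (CK1) the surviving monomials $p_iq_i'$ are \emph{not} automatically distinct (one can have $p_iq_i'=p_j$ even though $p_iq_i^*\neq p_jq_j^*$), so cancellation can kill $aq_j$, and the claimed linear independence of the leading terms is exactly what needs proof. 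In step 2, the existence of a homogeneous $c$ with $vcvb\neq 0$ is not justified by "vertices are orthogonal idempotents"; $vb$ can well be zero even though $b=bv\neq 0$, and what you actually need is something like semiprimeness of $L_K(E)$ (from $bxb\neq 0$ one gets $vxb=vxbv\neq 0$ since $b=bv$), or again graded regularity itself.

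For comparison, the paper avoids regularity altogether and instead uses the Reduction Theorem of \cite[Proposition 3.1]{Malaga_socle}: for the homogeneous $a\in I$ there are paths $p,q,r,s$ with $pq^*ars^*$ nonzero and lying either in $Kv$ or in $vL_K(E)v$ for $v$ on a cycle without exits (where homogeneous elements are scalar multiples of powers of the cycle), and then it writes down the idempotent explicitly, e.g.\ $\varepsilon=k^{-1}rs^*pq^*a$ in the first case, which lies in $I$ because only left multiplication of $a$ is used. Your approach buys brevity and avoids the case analysis at the cost of citing the graded regularity theorem; the paper's buys a self-contained, explicit idempotent at the cost of the reduction machinery. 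If you keep your version, cut steps 1 and 2 and apply regularity to $a$ itself.
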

\begin{proof}
If $I$ is a nontrivial graded left ideal of $L_K(E),$ then there is $a\in I$ such that $a\neq 0$ and $a$ is homogeneous. By \cite[Proposition 3.1]{Malaga_socle}, there are paths $p,q, r,s$ such that $pq^*ars^*$ is nonzero and either in $Kv$ for some $v\in E^0$ or in $vL_K(E)v$ for some $v$ which is on a cycle without exits. 

In the first case, we have that $pq^*ars^*=kv\neq 0$ for some $0\neq k\in K$ which implies that $\so(p)=\so(s)=v.$ Let $\varepsilon=k^{-1}rs^*pq^*a\in I$ which is homogeneous since $a$ is homogeneous. We have that $\varepsilon^2=k^{-2}rs^*pq^*ars^*pq^*a=k^{-2}rs^*\,kv\, pq^*a=k^{-1}rs^* pq^*a=\varepsilon.$ Since $\varepsilon rs^*=k^{-1}rs^*pq^*ars^*=k^{-1}rs^*kv=rs^*$ and $pq^*ars^*=kv\neq 0,$ we have that $\varepsilon rs^*\neq 0$ and this implies that $\varepsilon\neq0.$  

In the second case, let $c$ be a cycle without exits which contains $v$ and consider $c$ so that $\so(c)=\ra(c)=v.$ Let $c^{-1}$ denote $c^*$ and $c^0$ denote $v.$ If $m=|c|,$ a homogeneous element of $vL_K(E)v$ has the form $kc^{nm}$ where $k\in K$ and $n\in \Zset.$ Thus, $pq^*ars^*=kc^{nm}\neq 0$ for some $n$ and some $0\neq k\in K.$ This implies that $\so(p)=\so(s)=v.$ 
Let $\varepsilon=k^{-1}rs^*c^{-nm}pq^*a\in I$ which is homogeneous since $a$ is homogeneous. We have that $\varepsilon^2=k^{-2}rs^*c^{-nm}pq^*a rs^*c^{-nm}pq^*a=k^{-2}rs^*c^{-nm} kc^{nm} c^{-nm}pq^*a=k^{-1}rs^*c^{-nm} pq^*a=\varepsilon.$ Since $\varepsilon rs^*=k^{-1}rs^*c^{-nm}pq^*ars^*=k^{-1}rs^*c^{-nm}kc^{nm}=rs^*$  and $pq^*ars^*=kc^{nm}\neq 0,$ we have that $\varepsilon rs^*\neq 0$ and this implies that  $\varepsilon\neq0.$ 
\end{proof}

The content of Theorem \ref{graded_primitive_LPA} is also in \cite[Theorem 2.11]{Ranga_graded_primitive}. Since the proofs use different arguments, we include the proof of Theorem \ref{graded_primitive_LPA}. 

\begin{theorem}
$L_K(E)$ is graded primitive if and only if $E^0$ is downwards directed and has CSP. 
\label{graded_primitive_LPA}
\end{theorem}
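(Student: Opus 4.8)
The plan is to prove both directions by leveraging the three graded tools stated earlier. For the forward direction ($\Rightarrow$), suppose $L_K(E)$ is graded primitive. By Proposition~\ref{proposition_sufficient}, $E^0$ has CSP, so it remains to show $E^0$ is downwards directed. Graded primitive implies graded prime, and by Lemma~\ref{Zset_graded_prime} a graded prime ideal of a $\Zset$-graded ring is prime; applied to $\{0\}$ this says $L_K(E)$ is prime. It is well known (and follows by a direct argument on the monomials $pq^*$) that $L_K(E)$ prime forces $E^0$ to be downwards directed: given $u,v\in E^0$, if they had no common descendant then $uL_K(E)v$ would be zero while $u,v\neq 0$, contradicting primeness. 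So I would cite or quickly reprove that $E^0$ is downwards directed, completing ($\Rightarrow$).

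For the converse ($\Leftarrow$), assume $E^0$ is downwards directed and has CSP. First I would establish that $L_K(E)$ is graded prime: using the homogeneous-element criterion for graded primeness recalled in the excerpt, given nonzero homogeneous $a,b$, Lemma~\ref{lemma_nonzero_homogeneous_idempotent} (or the simpler \cite[Proposition 3.1]{Malaga_socle} reduction) lets me find vertices $u\le$ something reachable from $a$ and $v$ reachable from $b$; downward directedness gives a common descendant $w$, and then one constructs an element of $aL_K(E)b$ mapping onto (a multiple of) a path through $w$, which is nonzero. Hence $\{0\}$ is graded prime. Next, every nontrivial graded left ideal of $L_K(E)$ contains a nonzero homogeneous idempotent by Lemma~\ref{lemma_nonzero_homogeneous_idempotent}, so the hypotheses of Lemma~\ref{lemma_Fisher_Snider} are partially met; what remains is to produce the countable family of nontrivial graded ideals $I_n$ such that every nontrivial graded ideal meets some $I_n$.

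This last step is where CSP enters and is the main obstacle. Let $C=\{v_n\mid n\in\Zset^+\}$ be a countable set with $E^0\subseteq R(C)$, and set $I_n$ to be the graded ideal generated by $v_n$ (equivalently $I(H_n,B_{H_n})$-type ideals attached to the hereditary saturated closure of $\{v_n\}$, adjusted so $v_n\in I_n$ and $I_n\neq 0$). Given a nontrivial graded ideal $I$, it contains some vertex $w$ (every nonzero graded ideal of a Leavitt path algebra contains a vertex, by \cite[Theorem 5.7]{Tomforde} or the structure of graded ideals recalled in section~\ref{subsection_LPAs}); since $w\in E^0\subseteq R(C)$, there is $n$ with $w\ge v_n$, i.e.\ a path from $w$ to $v_n$. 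Then $v_n$ lies in the hereditary closure of $w$, hence in $I$ (as $I$ is an ideal and the hereditary saturated set of vertices in $I$ is downward closed along paths), so $v_n\in I\cap I_n$, giving $I\cap I_n\neq 0$. I should double-check the edge case where $I$ might not literally contain a vertex but only some $v^H$; the graded-ideal description shows $v^H\in I$ still forces a vertex of $E^0-H$ reachable from $v$ to lie in $I$, or one argues directly that $I$ contains a nonzero idempotent and intersect that with $I_n$. With the family $\{I_n\}$ in hand, Lemma~\ref{lemma_Fisher_Snider} yields that $L_K(E)$ is graded primitive, completing the proof. I would also remark that the left/right symmetry of all hypotheses (CSP and downward directedness are left–right symmetric, and $L_K(E)\cong L_K(E)^{op}$ via $pq^*\mapsto qp^*$) makes "graded primitive" unambiguous here.
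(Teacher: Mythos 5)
Your proposal is correct and follows essentially the same route as the paper: the forward direction combines Proposition~\ref{proposition_sufficient} with graded prime $\Rightarrow$ prime (Lemma~\ref{Zset_graded_prime}) to get downwards directedness, and the converse feeds the ideals generated by the countable set from CSP, together with Lemma~\ref{lemma_nonzero_homogeneous_idempotent} and the fact that every nontrivial graded ideal contains a vertex, into the graded Fisher--Snider criterion (Lemma~\ref{lemma_Fisher_Snider}). The only cosmetic difference is that you reprove the equivalence of downwards directedness with (graded) primeness directly, where the paper simply cites \cite[Theorem 1.4]{Gene_Bell_Ranga}; your worry about a graded ideal containing only elements $v^H$ is moot since $B_\emptyset=\emptyset$ forces every nonzero graded ideal to contain a vertex, exactly as in \cite[Lemma 4.1]{Tomforde}.
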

\begin{proof}
If $L_K(E)$ is graded primitive, then 
$L_K(E)$ is graded prime, so it is prime by Lemma \ref{Zset_graded_prime}. Thus, $E^0$ is downwards directed by \cite[Theorem 1.4]{Gene_Bell_Ranga} and $E^0$ has CSP by Proposition \ref{proposition_sufficient}. 

To prove the converse, note that if $E$ is downwards directed, then $L_K(E)$ is a prime ring by \cite[Theorem 1.4]{Gene_Bell_Ranga}. Since $L_K(E)$ is graded, it is a graded prime ring. If $E^0$ has CSP, let $C=\{v_n\in E^0\mid n=1,2,\ldots\}$ be such that $E^0\subseteq R(C)$ and let $I_n$ be the ideal generated by $v_n$. Since it is generated by a homogeneous element, $I_n$ is graded. If $I$ is a nontrivial graded ideal, it contains a vertex (by \cite[Lemma 4.1]{Tomforde}). As $E^0\subseteq R(C),$ that vertex is in $R(v_n)$ for some $n,$ so $v_n\in I_n\cap I.$ By Lemma \ref{lemma_nonzero_homogeneous_idempotent}, the assumption of Lemma \ref{lemma_Fisher_Snider} is satisfied, so  $L_K(E)$ is graded primitive by Lemma \ref{lemma_Fisher_Snider}. 
\end{proof}

Using Theorem \ref{graded_primitive_LPA}, Lemma \ref{Zset_graded_prime}, and the characterization of prime Leavitt path algebras from \cite[Theorem 1.4]{Gene_Bell_Ranga}, it is direct to construct an example of a Leavitt path algebra which is graded prime and not graded primitive. For example, graph $F$ from Example \ref{example_CSP_not_ISCP} is such that $F^0$ is downwards directed and without CSP, so $L_K(F)$ is graded prime and not graded primitive. 

We turn to the characterization of graded primitive ideals now. We need the following four lemmas for the proof of Theorem \ref{graded_primitive_ideal}. 

\begin{lemma}
If $H\subseteq E^0$ is hereditary and if $E^0-H$ has ICSP, then $E^0-H=R(C)$ for a countable set $C\subseteq E^0-H.$  
\label{lemma_ISCP_and_hereditary}
\end{lemma}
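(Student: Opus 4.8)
The plan is to show that the countable set $C\subseteq E^0-H$ witnessing ICSP can be enlarged, while staying inside $E^0-H$, to a set whose root is exactly $E^0-H$. By the definition of ICSP, there is a countable set $C_0\subseteq E^0-H$ with $E^0-H\subseteq R(C_0)$. We always have $R(C_0)\cap(E^0-H)\subseteq E^0-H$, but the issue is that $R(C_0)$ may contain vertices of $H$, so $R(C_0)$ itself need not equal $E^0-H$. Since $H$ is hereditary, though, no vertex of $H$ can lie above a vertex of $C_0\subseteq E^0-H$: if $u\in H$ and $u\geq w$ with $w\in C_0$, heredity forces $w\in H$, contradicting $w\in E^0-H$. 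Hence $R(C_0)\subseteq E^0-H$, and combined with $E^0-H\subseteq R(C_0)$ we conclude $E^0-H=R(C_0)$.

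So in fact the argument is short: take $C=C_0$, the witnessing set from ICSP, and the only thing to check is the inclusion $R(C)\subseteq E^0-H$, which is immediate from heredity of $H$. I would phrase it as: let $u\in R(C)$, so $u\geq w$ for some $w\in C$; if $u\in H$ then since $H$ is hereditary and $u\geq w$ we would get $w\in H$, contradicting $C\subseteq E^0-H$; therefore $u\in E^0-H$. Together with the ICSP inclusion $E^0-H\subseteq R(C)$ this gives the claimed equality.

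I do not anticipate a genuine obstacle here; the lemma is essentially a bookkeeping observation that heredity prevents the root of a subset of $E^0-H$ from leaking into $H$. The one place to be slightly careful is the direction of the order relation $\geq$ and of heredity as defined in the paper: heredity of $H$ says $u\in H$ and $u\in R(v)$ (i.e. $u\geq v$) implies $v\in H$, so it is exactly the statement that $H$ is closed downward under $\geq$, which is what makes the root of a set avoiding $H$ also avoid $H$. Modulo getting that orientation right, the proof is a one-line application of the definitions.
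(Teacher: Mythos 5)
Your proof is correct and uses the same idea as the paper: take the ICSP witness $C\subseteq E^0-H$ and use heredity of $H$ (downward closure under $\geq$) to see that $R(C)$ cannot meet $H$, so $R(C)=E^0-H$. The paper phrases this by first noting $R(C)=R(E^0-H)$ and then $R(E^0-H)=E^0-H$, but this is the same argument in a slightly longer form.
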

\begin{proof}
Let $C$ be a countable set such that $C\subseteq E^0-H\subseteq R(C).$ This implies that $R(C)\subseteq R(E^0-H)\subseteq R(R(C))=R(C)$ and so $R(C)=R(E^0-H).$ The assumption that $H$ is hereditary implies that $R(E^0-H)=E^0-H.$ Indeed, assuming that $v\in R(E^0-H),$ there is $u\in E^0-H$ such that $v\geq u.$ Then $v\notin H$ since otherwise $u$ would be in $H$ as $H$ is hereditary. Thus, $v\in E^0-H.$ Hence, $R(C)=R(E^0-H)=E^0-H.$ 
\end{proof}

\begin{lemma}
If $H\subseteq E^0$ is hereditary and if $E^0-H$ is downwards directed with ICSP, then $E^0-H=R(C)$ for a countable set $C=\{v_n\in E^0-H\mid v_n\geq v_{n+1}$ and $v_{n+1}\ngeq v_k$ for all $k\leq n$ and all $n=0,1,\ldots\}.$  
\label{lemma_strictly_decreasing}
\end{lemma}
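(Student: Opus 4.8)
The plan is to start from Lemma~\ref{lemma_ISCP_and_hereditary}, which already produces a countable set $C_0=\{u_0,u_1,u_2,\ldots\}\subseteq E^0-H$ with $E^0-H=R(C_0)$, and to reorganize $C_0$ into a chain $v_0\geq v_1\geq v_2\geq\cdots$ that is strictly decreasing in the required sense (never returning: $v_{n+1}\ngeq v_k$ for $k\leq n$) while still satisfying $R(\{v_n\})=E^0-H$. The key device is downward-directedness of $E^0-H$: given the part of the chain built so far and a vertex of $C_0$ not yet ``covered'' by it, downward-directedness lets us push down to a common lower bound lying in $E^0-H$.

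Concretely, I would build the $v_n$ by recursion. Put $v_0:=u_0$. Suppose $v_0\geq v_1\geq\cdots\geq v_n$ have been chosen in $E^0-H$ with $v_{k+1}\ngeq v_j$ for all $j\leq k<n$. If $C_0\subseteq R(\{v_0,\ldots,v_n\})$, stop; the chain is finite in that case. Otherwise let $u_s$ be the element of $C_0$ of least index with $u_s\notin R(\{v_0,\ldots,v_n\})$, i.e.\ $u_s\ngeq v_j$ for every $j\leq n$. By downward-directedness of $E^0-H$ applied to $v_n$ and $u_s$, choose $w\in E^0-H$ with $v_n\geq w$ and $u_s\geq w$, and set $v_{n+1}:=w$.

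Then I would verify three things. First, $v_{n+1}\ngeq v_j$ for all $j\leq n$: if $v_{n+1}=w\geq v_j$ then $u_s\geq w\geq v_j$, contradicting the choice of $u_s$; this is precisely where processing the \emph{least} not-yet-covered element at each step pays off. Second, the least uncovered index strictly increases at every step, so after step $n$ all of $u_0,\ldots,u_n$ lie in $R(\{v_0,\ldots,v_n\})$; hence, if the recursion never stops, every $u_i$ is covered by some $v_j$. Third, $R(C)=\bigcup_i R(v_i)\supseteq\bigcup_i R(u_i)=R(C_0)=E^0-H$, while $R(C)\subseteq R(E^0-H)=E^0-H$ since $H$ is hereditary (exactly as in the proof of Lemma~\ref{lemma_ISCP_and_hereditary}); so $R(C)=E^0-H$. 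The same computation covers the case in which the recursion terminates and $C$ is finite, which is genuinely unavoidable (e.g.\ when $E^0-H$ is a single cycle, where $C$ must be a single vertex). I expect the only real obstacle to be organizing a single induction that simultaneously preserves the strict-descent condition and forces eventual coverage of $C_0$; once one commits to handling the least uncovered $u_s$ at each step, both requirements fall out of the same choice.
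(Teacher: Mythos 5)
Your proposal is correct, and it proves exactly what the lemma asserts (with the possibly finite chain read the same way the paper itself reads it when applying the lemma in Lemma~\ref{lemma_MT3_and_CSP}). The starting point and the engine are the same as in the paper: Lemma~\ref{lemma_ISCP_and_hereditary} supplies a countable $C_0\subseteq E^0-H$ with $R(C_0)=E^0-H$, downward directedness produces the descent, and the final identification $R(C)=E^0-H$ uses $R(C)\subseteq R(E^0-H)=E^0-H$ via hereditariness, just as in the paper. The organization of the induction, however, is genuinely different. The paper works in two passes: it first builds a descending chain $D=\{w_n\}$ whose members are taken from the original countable set (when the common lower bound $u$ of $w_{n-1}$ and $u_n$ is not in that set, it is pushed further down to some $u_m$ with $u\geq u_m$), so that $F\subseteq R(D)$, and then runs a second induction pruning $D$ to impose the no-return condition $v_{n+1}\ngeq v_k$. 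You run a single recursion keyed to the least element of $C_0$ not yet in $R(\{v_0,\ldots,v_n\})$: taking $v_{n+1}$ to be a common lower bound of $v_n$ and that element makes the least uncovered index strictly increase (hence $C_0\subseteq R(C)$), and the no-return property comes for free, since $v_{n+1}\geq v_k$ would give $u_s\geq v_k$, contradicting the choice of $u_s$. Your chain's vertices need not belong to $C_0$, unlike the paper's, but the statement does not require that. What your route buys is a cleaner argument in which descent, no-return, and coverage all follow from one invariant; what the paper's route buys is that the separating chain stays inside the originally given countable set, at the cost of a second pruning induction.
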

\begin{proof}
By Lemma \ref{lemma_ISCP_and_hereditary}, there is a countable set $F=\{u_n\in E^0-H\mid n=0,1,\ldots\}$ such that $ E^0-H=R(F).$ First, we create a set $D=\{w_n\in F\mid w_n\geq w_{n+1}$ for all $n=0,1,\ldots\}.$
Let $w_0=u_0.$ Continue inductively as follows. Assuming that $w_k$ has been defined for $k<n$, consider whether $w_{n-1}\geq u_n$ holds. If it does, then $w_n=u_n.$ If it does not, let $u\in E^0-H$ be a vertex such that $u_n\geq u, w_{n-1}\geq u$ and let $m$ be such that $u\geq u_m.$ Then let $w_n=u_m.$ By construction, $F\subseteq R(D).$ 

Let us now create a countable set $C\subseteq D$ with required properties. 
Let $v_0=w_0.$ Continue inductively as follows. Assuming that $v_k$ has been defined for $k<n$, consider whether $w_n\geq v_k$ holds for any $k<n.$ If it does, then let $v_n=w_{n-1}.$ If it does not, then $v_n=w_n.$ By construction, $C\subseteq D\subseteq R(C)$ thus $R(C)\subseteq R(D)\subseteq R(R(C))=R(C).$ This implies that $F\subseteq R(D)=R(C)$ and so $R(F)\subseteq R(R(C))=R(C).$ We also have that $C\subseteq D\subseteq F$ which implies that $R(C)\subseteq R(F).$ Thus, $R(C)=R(F)=E^0-H.$
\end{proof}

\begin{lemma} 
If $H\subseteq E^0$ is hereditary and saturated and $E^0-H$ is downwards directed and has ICSP, then $E^0-H=R(\alpha^0)$ for some path $\alpha$ such that if $\alpha$ is infinite, then it is strictly decreasing and if $\alpha$ is finite, then exactly one of the following holds for $v=\ra(\alpha).$   
\begin{enumerate}[\upshape(i)]
\item The vertex $v$ emits no edges to $E^0-H$.

\item The vertex $v$ is on a cycle which is extreme in $E^0-H.$   

\item The vertex $v$ is on an exclusive cycle. 
\end{enumerate}
\label{lemma_MT3_and_CSP}
\end{lemma}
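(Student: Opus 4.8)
The plan is to apply Lemma~\ref{lemma_strictly_decreasing} to obtain a countable set $C=\{v_n\}\subseteq E^0-H$ with $R(C)=E^0-H$, $v_n\ge v_{n+1}$, and $v_{n+1}\ngeq v_k$ for all $k\le n$, and then to split into the cases ``$C$ infinite'' and ``$C$ finite''. Two elementary remarks are used throughout. First, since $H$ is hereditary, $w\ge u\in E^0-H$ forces $w\in E^0-H$; hence $R(W)\subseteq E^0-H$ whenever $W\subseteq E^0-H$, so (using $R(C)=E^0-H$) in order to prove $R(\beta^0)=E^0-H$ for a path $\beta$ it suffices to check $C\subseteq\beta^0\subseteq E^0-H$. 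Second, $v_n\ne v_{n+1}$, since $v_n=v_{n+1}$ would give $v_{n+1}\ge v_n$, contradicting $v_{n+1}\ngeq v_n$.

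If $C$ is infinite, I first choose for each $n$ a path $p_n$ from $v_n$ to $v_{n+1}$; this exists because $v_n\ge v_{n+1}$, and $|p_n|\ge 1$ because $v_n\ne v_{n+1}$. Then $\alpha:=p_0p_1p_2\cdots$ is a genuine infinite path, every vertex of $p_n$ is $\ge v_{n+1}\in E^0-H$ and hence in $E^0-H$, and $\alpha^0\supseteq C$, so $R(\alpha^0)=E^0-H$; moreover $\alpha$ is strictly decreasing because $\so(p_n)=v_n\gneq v_{n+1}=\so(p_{n+1})$.

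If $C=\{v_0,\dots,v_N\}$ is finite, then $v_0\ge\cdots\ge v_N$ gives $R(v_0)\subseteq\cdots\subseteq R(v_N)$, so $R(v_N)=R(C)=E^0-H$. I take $\alpha$ to be the trivial path at $v:=v_N$, so that $R(\alpha^0)=E^0-H$, and it remains to verify that exactly one of (i)--(iii) holds for $v$. If $v$ emits no edge with range in $E^0-H$, then (i) holds. Otherwise $v$ emits an edge to some $w\in E^0-H=R(v)$, so $v\ge w$ and $w\ge v$; a closed path of minimal positive length based at $v$ is then a cycle $c$ through $v$, and every vertex of $c$ is $\ge v$, so $c^0\subseteq R(v)=E^0-H$. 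Now the key dichotomy: if $c$ is exclusive, (iii) holds; if $c$ is not exclusive, I claim $c$ is extreme in $E^0-H$. Indeed $c^0\subseteq E^0-H$ is known, and the ``return'' condition is automatic because any path $p$ with $\so(p)\in c^0$ and $p^0\subseteq E^0-H$ has $\ra(p)\in E^0-H=R(v)$, hence $\ra(p)\ge v\in c^0$ along a path whose vertices are all $\ge v$ and thus in $E^0-H$; and $c$ has an exit with range in $E^0-H$ because, $c$ not being exclusive, some $u\in c^0$ is the base of a cycle $c'$ distinct from $c$, and comparing $c$ re-based at $u$ with $c'$ they first diverge at a vertex $x\in c^0$ where $c'$ takes an edge $g\notin c^1$ with $\ra(g)\in (c')^0\subseteq R(u)\subseteq R(v)=E^0-H$.

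It remains to see that exactly one of (i)--(iii) holds for $v$: in cases (ii) and (iii) the vertex $v$ lies on a cycle, whose vertex set is contained in $R(v)$, so $v$ emits an edge into $E^0-H$ and (i) is ruled out; and (ii) and (iii) are incompatible because a cycle that is extreme in $E^0-H$ is never exclusive --- an exit with range in $E^0-H$ together with the automatic return path, after deleting repeated vertices, yields a cycle through a vertex of $c^0$ using an edge not in $c^1$ --- whereas no vertex of an exclusive cycle lies on a second, distinct cycle. The step I expect to be the main obstacle is this finite-$C$ analysis, in particular the dichotomy that the cycle $c$ through $v_N$ is exclusive or extreme in $E^0-H$: its conceptual content is small once one observes that $c^0\subseteq R(v_N)=E^0-H$, which makes the return condition for extremeness automatic, so what is left is only the routine cycle-extraction bookkeeping converting ``not exclusive'' into ``has an exit with range in $E^0-H$'' and back; the infinite-$C$ case and the verifications that the root of $\alpha^0$ equals $E^0-H$ are straightforward.
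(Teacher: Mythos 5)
Your proof is correct and takes essentially the same route as the paper's: apply Lemma \ref{lemma_strictly_decreasing}, split into the cases of $C$ infinite and finite, concatenate connecting paths to obtain a strictly decreasing infinite $\alpha$ in the first case, and in the second case use $R(v)=E^0-H$ at the minimal vertex to run the ``not on a cycle / non-exclusive (hence extreme in $E^0-H$) / exclusive'' trichotomy. The only differences are cosmetic: you take $\alpha$ to be the single vertex $v_N$ rather than a path through all of $C$ (which the statement permits), and you spell out the ``exactly one'' verification --- including the cycle-extraction showing an extreme-in-$E^0-H$ cycle cannot be exclusive --- which the paper leaves implicit.
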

\begin{proof}
By Lemma \ref{lemma_strictly_decreasing}, there is a countable set $C=\{v_n\in E^0-H\mid v_n\geq v_{n+1}$ and $v_{n+1}\ngeq v_k$ for all $k\leq n$ and all $n=0,1,\ldots\}$ such that $E^0-H=R(C).$ The properties of $C$ guarantee that if $v_n$ is on a cycle, then no closed path containing $v_n$ contains $v_m$ for any other $m\neq n.$

If $C$ is infinite, let $\alpha$ be a path consisting of paths which connect $v_n$ and $v_{n+1}.$ In this case, $\alpha$ is infinite and strictly decreasing. If $C$ is finite and $n$ is the smallest such that $v_n=v_m$ for all $m\geq n,$ then let $\alpha$ be a finite path connecting $v_i$ and $v_{i+1}$ for $i=0,\ldots, n-1.$ In  either case, $C\subseteq \alpha^0\subseteq R(C)=E^0-H$ and so $R(C)\subseteq R(\alpha^0)\subseteq R(R(C))=R(C).$ Thus, $R(\alpha^0)=E^0-H.$

In the second case, let $v=v_n=\ra(\alpha).$ If $v$ is not on a cycle, then $v$ emits no edges to $E^0-H$ (because if $e\in\so^{-1}(v)\cap \ra^{-1}(E^0-H)$, then  $\ra(e)\in R(v)$ so $e$ is on a cycle containing $v$). In this case (i) holds. If $v$ is on a cycle, say $c,$ and such cycle is not exclusive, we claim that $c$ is extreme in $E^0-H.$
Since all vertices of $c$ connect to $v$, $c^0\subseteq R(v)=E^0-H.$ As $c$ is not exclusive, $c$ has exits to $E^0-H.$ If $w\in E^0-H$ is the range of a path $p$ such that $p^0\subseteq E^0-H,$ $\so(p)\in c^0$ and $w\notin c^0,$ then $w\geq v,$ so there is a path $q$ with vertices in $E^0-H$ from $w$ to $v.$ This shows that $c$ is extreme in $E^0-H,$ so (ii) holds. If $v$ is on a cycle $c$ and $c$ is exclusive, (iii) holds.
\end{proof}

\begin{lemma}
The following conditions are equivalent for a hereditary and saturated set $H\subseteq E^0$ and $S\subseteq B_H.$ 
\begin{enumerate}[\upshape(1)]
\item $E^0-H$ is downwards directed and has ICSP and $S$ is either $B_H$ or $B_H-\{u\}$ where $u\in B_H$ is such that $E^0-H=R(u).$ 
\item The set of vertices of the quotient graph $E/(H,S)$ is downwards directed and has CSP. 
\end{enumerate}
\label{lemma_quotient_graph}
\end{lemma}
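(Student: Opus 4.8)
The plan is to abbreviate $F=E/(H,S)$ and $W=F^0=(E^0-H)\cup\{v'\mid v\in B_H-S\}$, and to begin by isolating three bookkeeping facts about $F$ that will be used throughout. First, \emph{every adjoined vertex $v'$ is a sink of $F$}: by the definition of $F^1$ the source of every edge of $F$ lies in $E^0-H$, so no $v'$ emits an edge. Second, \emph{for $u,v\in E^0-H$ one has $u\geq v$ in $F$ if and only if $u\geq v$ in $E$}: a path of $E$ ending at $v\notin H$ stays in $E^0-H$ because $H$ is hereditary, hence is a path of $F$; conversely a path of $F$ ending at an unprimed vertex can use no edge of the form $e'$ (these terminate at sinks), so it is a path of $E$. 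Third, \emph{if $w\in E^0-H$ and $w\geq v'$ in $F$ then $w\geq v$ in $E$}: such an $F$-path has positive length and its last edge is some $e'$ with $\ra(e)=v$, and applying the second fact to the initial segment gives $w\geq\so(e)\geq v$ in $E$. Together with the trivial observations that ICSP implies CSP and that CSP is unaffected by enlarging a separating set by countably many vertices, these facts reduce the lemma to careful reachability arguments.

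For $(1)\Rightarrow(2)$: if $S=B_H$ then $W=E^0-H$ literally, and the second fact makes (2) immediate. If $S=B_H-\{u\}$ with $E^0-H=R(u)$, then $W=(E^0-H)\cup\{u'\}$, and the crucial point is that $u$ lies on a cycle --- since $u\in B_H$ there is an edge $e$ with $\so(e)=u$ and $\ra(e)\in E^0-H=R(u)$, so $\ra(e)\geq u$ and $u\xrightarrow{e}\ra(e)\to\cdots\to u$ is closed. I would then show $w\geq u'$ in $F$ for every $w\in E^0-H$: combining $w\geq u$ (from $E^0-H=R(u)$) with this closed path yields an $E$-path of positive length from $w$ to $u$; its last edge $g$ has $\ra(g)=u\in B_H-S$, so $g'$ is an $F$-edge into $u'$, and the second fact applied to the initial segment gives $w\geq\so(g)\geq u'$ in $F$. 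Downwards directedness of $W$ follows (common lower bounds for pairs inside $E^0-H$ transfer from $E$ to $F$ by the second fact, and any pair involving $u'$ is handled by $u'$ itself, legitimately since $u'$ is a sink reached by everything), and CSP of $W$ follows by adjoining $u'$ to a countable separating set for $E^0-H$.

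For $(2)\Rightarrow(1)$: downwards directedness and ICSP of $E^0-H$ are extracted from (2) by a substitution trick --- whenever a witness in $W$ produced by (2) happens to be a primed vertex $v'$, I replace it by $v\in E^0-H$ (legitimate by the third fact together with $v\in B_H-S\subseteq E^0-H$). To pin down $S$: if $B_H-S$ contained distinct $u_1,u_2$, the two sinks $u_1',u_2'$ would have no common lower bound in $W$, contradicting (2); hence $|B_H-S|\leq1$. If $B_H-S=\{u\}$, then $R(u)\subseteq E^0-H$ automatically ($H$ hereditary, $u\notin H$), and for the reverse inclusion each $w\in E^0-H$ has a common lower bound with $u'$ in $W$, necessarily $u'$ itself, so $w\geq u'$ in $F$ and hence $w\geq u$ in $E$ by the third fact; thus $E^0-H=R(u)$.

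The step I expect to be the main obstacle is the forward direction when $S=B_H-\{u\}$: one must notice that the hypothesis $E^0-H=R(u)$ forces $u$ onto a cycle and then exhibit, for every vertex of $E^0-H$, a path in the quotient graph to the newly isolated sink $u'$ --- without the cycle observation there need be no edge of $F$ into $u'$ at all. The pervasive minor subtlety is the bookkeeping of reachability in $E$ versus in $F$, encapsulated in the three facts above, which must be invoked with care at each step.
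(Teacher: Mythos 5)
Your proof is correct and follows essentially the same route as the paper's: the same case analysis on whether a witness vertex is primed or not, the two-sinks contradiction to force $|B_H-S|\leq 1$, and the observation that $u'$ being a sink in a downwards directed quotient forces $E^0-H=R(u)$. The only difference is that you spell out more explicitly the transfer of reachability between $E$ and $E/(H,S)$ and the point that $E^0-H=R(u)$ puts $u$ on a closed path so that the quotient actually has an edge into $u'$ --- a detail the paper's proof passes over tacitly when asserting that $u'$ is a lower bound.
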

\begin{proof}
(1) $\Rightarrow$ (2). Assuming that (1) holds, let us consider two vertices of $E/(H,S).$ We have that either both are also vertices of $E$ or one (and only one of them by (1)) is of the form $u'$ for some infinite emitter $u\in B_H-S$ such that $E^0-H=R(u).$ In the first case, a lower bound in $E^0-H,$ which exists by the downwards directedness of $E^0-H,$ is a lower bound in the quotient graph as well. In the second case, both vertices are in $R(u),$ so $u'$ is a lower bound in the quotient graph. Since $E^0-H$ has ICSP, there is a countable set
$C\subseteq E^0-H$ such that $E^0-H\subseteq R(C).$ If $S=B_H,$ then the existence of $C$ shows that the quotient graph has CSP. If $B_H-S=\{u\},$ then $C\cup\{u'\}\subseteq (E/(H,S))^0=E^0-H\cup\{u'\}\subseteq R(C)\cup \{u'\}\subseteq R(C\cup\{u'\}).$ 

If (2) holds, consider two vertices in $E^0-H.$ If their lower bound in the quotient graph is a vertex of $E^0-H,$ this lower bound is a lower bound in $E^0-H$ also. If it is $u'$ for some $u\in B_H$, then these two vertices are in $R(u)$ in $E,$ so $u$ is their lower bound in $E^0-H.$ If $|B_H-S|>2,$ then the quotient graph has two sinks which would contradict the downwards directedness. Thus, either $B_H=S$ or $B_H-S=\{u\}$ for some $u\in B_H.$ In the first case, $E^0-H$ has ICSP because the existence of countable $C$ such that $C\subseteq (E/(H,S))^0=E^0-H\subseteq R(C)$ implies that $E^0-H$ has ICSP.
If $B_H-S=\{u\},$ we show that $R(u)=E^0-H.$ Since $u'$ is a sink in the quotient graph and $(E/(H,S))^0$ is downwards directed, $(E/(H,S))^0=R(u').$ Since the relation 
$v\geq u'$ in the quotient graph holds if and only if $v\geq u$ in $E$ for every $v\in E^0-H,$ we have that $E^0-H=R(u).$ This also shows that $E^0-H$ has ICSP since $\{u\}\subseteq E^0-H=R(u).$
\end{proof}

In the next example, we illustrate that $(E/(H,S))^0$ can be downwards directed and with CSP both when $S=B_H$ and when $S$ is such that $|B_H-S|=1.$ 

\begin{example}
Let $E$ be the following graph 
$\;\;\;\xymatrix{\bullet^v \ar@(ul,dl) 
\ar@{.} @/_1pc/ [r] _{\mbox{ } } \ar@/_/ [r] \ar [r] \ar@/^/ [r] \ar@/^1pc/ [r] &\bullet^w  }.$
Let $H=\{w\}$ so that $E^0-H=\{v\}=R(v),$ and $B_H=\{v\}.$
If $S=\emptyset,$ the quotient graph is 
$\;\;\;\xymatrix{\bullet^v \ar@(ul,dl) 
\ar[r] &\bullet^{v'}}$ and, if $S=\{v\},$ the quotient graph is $\;\;\;\xymatrix{\bullet^v \ar@(ul,dl)}.$ In both cases, $(E/(H,S))^0$ is downwards directed and with CSP. 
\label{example_inf_emitter_in_B_H}
\end{example}

\begin{theorem}
If $P$ is an ideal of $L_K(E),$
the following conditions are equivalent. 
\begin{enumerate}[\upshape(1)]
\item The ideal $P$ is graded primitive. 
\item The ideal $P$ is of the form $I(H, S)$ for an admissible pair $(H,S)$ such that $E^0-H$ is downwards directed and has ICSP and $S$ is either $B_H$ or $B_H-\{u\}$ where $u$ is a vertex of $B_H$ such that $E^0-H=R(u).$
\item Exactly one of the following holds.
\begin{enumerate}[\upshape(a)]
\item $P=I(H, B_H)$ where $H=E^0-R(\alpha^0)$ for a strictly decreasing infinite path $\alpha$ such that $R(v)\subsetneq R(\alpha^0)$ for all $v\in E^0-H.$ 

\item $P=I(H, B_H)$ where $H=E^0-R(v)$ for a vertex $v$  which emits no edges to $E^0-H.$ 

\item $P=I(H, S)$ where $H=E^0-R(v)$ for a vertex $v$ which is on a cycle extreme in $E^0-H$ and $S=B_H$ or $S=B_H-\{u\}$ for some $u\in B_H$ which is on the same cycle as $v$.  

\item  $P=I(H, S)$ where $H=E^0-R(v)$ for a vertex $v$ which is on an exclusive cycle and $S=B_H$ or $S=B_H-\{u\}$ for some $u\in B_H$ which is on the same cycle as $v$. 
\end{enumerate}
\end{enumerate}
\label{graded_primitive_ideal}
\end{theorem}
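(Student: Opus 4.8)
The plan is to prove $(1)\Leftrightarrow(2)$ via the quotient construction, and then $(2)\Leftrightarrow(3)$ by unpacking Lemma~\ref{lemma_MT3_and_CSP} case by case.

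For $(1)\Leftrightarrow(2)$, note first that a graded primitive ideal is in particular graded, so by the correspondence between graded ideals and admissible pairs I may write $P=I(H,S)$; conversely every $I(H,S)$ is a graded (two-sided) ideal, so a given admissible pair as in $(2)$ produces such a $P$. Since $P$ is graded primitive exactly when the ring $L_K(E)/P\cong L_K(E/(H,S))$ is graded primitive, Theorem~\ref{graded_primitive_LPA} rephrases this as: $(E/(H,S))^0$ is downwards directed and has CSP. By Lemma~\ref{lemma_quotient_graph} the latter is equivalent to ``$E^0-H$ is downwards directed and has ICSP and $S$ is $B_H$ or $B_H-\{u\}$ with $E^0-H=R(u)$'', which is exactly condition $(2)$. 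Chaining these equivalences yields $(1)\Leftrightarrow(2)$.

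For $(3)\Rightarrow(2)$ I check the four cases directly. In each, $H$ has the form $E^0-R(V)$ with $V=\alpha^0$ in (a) and $V=\{v\}$ in (b)--(d); hence $H$ is hereditary and saturated and $E^0-H=R(V)$. This set is downwards directed (a lower bound of two of its vertices is a vertex far enough down $\alpha$ in case (a), and is $v$ in cases (b)--(d)) and has ICSP, witnessed by the countable set $C=\alpha^0$ in (a) and $C=\{v\}$ in (b)--(d). Finally $S$ is $B_H$ or $B_H-\{u\}$, and in the latter situation $u$ and $v$ lie on a common cycle, so $v\geq u$ and $u\geq v$, hence $R(u)=R(v)=E^0-H$; thus $(2)$ holds.

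For $(2)\Rightarrow(3)$ I apply Lemma~\ref{lemma_MT3_and_CSP} to $H$. If $S=B_H$, the lemma produces $\alpha$ with $E^0-H=R(\alpha^0)$: when $\alpha$ is infinite and strictly decreasing, every $v\in E^0-H$ satisfies $v\geq v_j$ for a vertex $v_j$ on $\alpha$, and strict decrease yields a vertex of $\alpha$ below $v_j$ but not above it, so $R(v)\subseteq R(v_j)\subsetneq R(\alpha^0)$, placing us in case (a); when $\alpha$ is finite, the trichotomy (i)/(ii)/(iii) at $v=\ra(\alpha)$ yields (b)/(c)/(d). If $S=B_H-\{u\}$ with $E^0-H=R(u)$, I re-run the construction in the proof of Lemma~\ref{lemma_MT3_and_CSP} with the admissible countable set $C=\{u\}$: then $\alpha$ collapses to the single vertex $u$, alternative (i) is excluded since $u\in B_H$ emits an edge into $E^0-H$, and so (ii) or (iii) holds at $v=u$, landing us in case (c) or (d) with $u$ on the same cycle as $v=u$. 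For ``exactly one'', observe that (a) holds precisely when no single vertex of $E^0-H$ has root equal to $E^0-H$, whereas (b), (c), (d) are separated by whether $v$ emits an edge into $E^0-H$, lies on a cycle with an exit returning inside $E^0-H$, or lies on an exclusive cycle; moreover a vertex with root $E^0-H$ on an exclusive cycle forces every such vertex onto that cycle, which rules out simultaneous occurrence.

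I expect the delicate point to be the $S=B_H-\{u\}$ branch of $(2)\Rightarrow(3)$: a black-box use of Lemma~\ref{lemma_MT3_and_CSP} may return a vertex $v$ that is equivalent to $u$ but sits on a cycle different from any cycle through $u$, so one genuinely has to feed the singleton $C=\{u\}$ into the lemma's construction to force the endpoint to equal $u$. The ``exactly one'' bookkeeping is the other spot where a little cycle-structure analysis is unavoidable; the remaining implications are essentially formal given Theorem~\ref{graded_primitive_LPA} and Lemmas~\ref{lemma_quotient_graph} and~\ref{lemma_MT3_and_CSP}.
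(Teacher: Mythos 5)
Your proposal is correct and follows essentially the same route as the paper: $(1)\Leftrightarrow(2)$ via the graded isomorphism $L_K(E)/I(H,S)\cong L_K(E/(H,S))$ combined with Lemma~\ref{lemma_quotient_graph} and Theorem~\ref{graded_primitive_LPA}, and $(2)\Leftrightarrow(3)$ via Lemma~\ref{lemma_MT3_and_CSP}. The only local deviation is the $S=B_H-\{u\}$ branch of $(2)\Rightarrow(3)$, where you set $v=u$ and rerun the lemma's trichotomy at $u$ (a perfectly valid, arguably cleaner step), whereas the paper keeps the vertex produced by the lemma and concludes that $u$ and $v$ lie on a common cycle from $R(u)=R(v)=E^0-H$.
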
 
\begin{proof}
$(1) \Leftrightarrow (2).$
If $P$ is a graded ideal, then $P=I(H,S)$ for some admissible pair $(H, S)$ and $L_K(E)/P$ is graded isomorphic to the Leavitt path algebra of the quotient graph $E/(H,S).$ Since $P$ is a graded primitive ideal if and only if $L_K(E)/P$ is a graded primitive ring, the equivalence of (1) and (2) follows directly from Lemma \ref{lemma_quotient_graph} and Theorem \ref{graded_primitive_LPA}.  

(1) and $(2) \Rightarrow (3).$ By Lemma \ref{lemma_MT3_and_CSP}, 
if (2) holds, then $E^0-H=R(\alpha^0)$ where $\alpha$ satisfies one of the two cases from Lemma \ref{lemma_MT3_and_CSP}. If $\alpha$ is infinite, then it is strictly decreasing. If $R(v)=R(\alpha^0)$ for some $v\in E^0-H,$ we can replace $\alpha$ by $v$ and consider the case when $\alpha$ is finite by the proof of Lemma \ref{lemma_MT3_and_CSP}. Thus, we can assume that $R(v)\subsetneq R(\alpha^0)$ for all $v\in E^0-H$ and we show that $S=B_H$ in this case. If $v\in B_H-S,$ and $u\in R(\alpha^0)-R(v),$ then $v'$ is a sink and $u\notin R(v')$ in the quotient graph so (2) fails by Lemma \ref{lemma_quotient_graph}. Thus, such $v$ cannot exist and we have that $B_H=S.$ Hence, (a) holds.

If $\alpha$ is finite and $v=\ra(\alpha),$ then exactly one of the conditions (i), (ii), and (iii) of Lemma \ref{lemma_MT3_and_CSP} holds. If it is (i), then $v$ is not on the same cycle as $u$ for any $u\in E^0-H$ and we claim that $S=B_H.$ If $u\in B_H-S,$ for some $u\in E^0-H,$ then $u'$ is a sink and $v\notin R(u')$ in the quotient graph, so (2) fails by Lemma \ref{lemma_quotient_graph}. Hence, no such $u$ can exist. Thus, $S=B_H$ and (b) holds. If (ii) holds, note that if $S=B_H-\{u\}$, then $u$ and $v$ are on the same cycle since $R(v)=E^0-H$ and $R(u)=E^0-H$ by (2), so (c) holds. Similarly, if (iii) holds, then (d) holds. 
 
$(3) \Rightarrow (2).$ Since $E^0-H$ is of the form $R(\alpha^0)$ for a (finite or infinite) path $\alpha$ with vertices in $E^0-H$ in all four cases, we have that $E^0-H$ is downwards directed and has ICSP. Also, in all four cases, the set $S$ has the required form. 
\end{proof}

For the next corollary, we recall \cite[Theorem 4.7]{Gene_Bell_Ranga} stating that $L_K(E)$ is a primitive ring if and only if $E^0$ is downwards directed and has Condition (L) and CSP. 

\begin{corollary}
A graded primitive ideal $P=I(H,S)$ of $L_K(E)$ is not primitive if and only if there is an exclusive cycle $c$ such that $H=E^0-R(c^0)$ and $S=B_H.$    
\label{corollary_graded_primitive_and_primitive} 
\end{corollary}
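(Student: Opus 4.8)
The plan is to reduce the statement to the classical description of primitive Leavitt path algebras applied to the quotient graph. Since $P$ is graded, $P=I(H,S)$ for an admissible pair $(H,S)$ and $L_K(E)/P$ is graded isomorphic to $L_K(E/(H,S))$, so $P$ is primitive if and only if $L_K(E/(H,S))$ is primitive, which by \cite[Theorem 4.7]{Gene_Bell_Ranga} means that $(E/(H,S))^0$ is downwards directed, has Condition (L), and has CSP. As $P$ is graded primitive by hypothesis, Lemma \ref{lemma_quotient_graph} together with Theorem \ref{graded_primitive_LPA} already supplies the downwards directedness and CSP of $(E/(H,S))^0$. Hence $P$ fails to be primitive if and only if $E/(H,S)$ does not have Condition (L), i.e. it contains a cycle with no exit.

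The next step is to rewrite ``cycle with no exit in $E/(H,S)$'' purely in terms of $E$. The vertices $v'$ adjoined in forming $E/(H,S)$ are sinks, so every cycle of $E/(H,S)$ is a cycle $c$ of $E$ with $c^0\subseteq E^0-H$; inspecting the edges of $E/(H,S)$ emitted by $c^0$, such a $c$ has no exit in $E/(H,S)$ precisely when \textup{(i)} every edge of $E$ with source in $c^0$ and range in $E^0-H$ lies on $c$, and \textup{(ii)} no edge of $E$ with source in $c^0$ has range in $B_H-S$.

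From (i), (ii), and the downwards directedness of $(E/(H,S))^0$ one then extracts the three assertions on the right-hand side. Fix a vertex $v$ of $c$. For any $y\in E^0-H$ pick a common lower bound $t$ of $y$ and $v$ in $(E/(H,S))^0$; a path witnessing $v\ge t$ in $E/(H,S)$ cannot pass through a primed sink, so if $t\in E^0-H$ it comes from a path in $E$ inside $E^0-H$, and reading (i) edge by edge forces that path to run along $c$, whence $t\in c^0$; if instead $t=u'$ then $v\ge u$ in $E$ with $u\in E^0-H$, again forcing $u\in c^0$ and contradicting (ii) via the cycle edge into $u$. Thus $y\ge$ some vertex of $c$ for every $y\in E^0-H$, so $E^0-H\subseteq R(c^0)$, and since $H$ is hereditary, $H=E^0-R(c^0)$. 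The same argument applied to $u'$ and $v$ shows $B_H-S=\emptyset$, i.e. $S=B_H$. Finally $c$ is exclusive: if $d\ne c$ is a cycle through a vertex $x$ of $c$, then, starting $d$ at $x$, each successive edge of $d$ has range in $d^0\subseteq R(x)\subseteq R(c^0)=E^0-H$, so by (i) it must be the corresponding edge of $c$, and inductively $d$ retraces $c$, a contradiction. For the converse, assume $c$ is exclusive with $H=E^0-R(c^0)$ and $S=B_H$; then $(E/(H,S))^0=R(c^0)$ and $c$ survives as a cycle there, while any exit of $c$ in $E/(H,S)$ would be an edge $e\notin c$ with $\so(e)\in c^0$ and $\ra(e)\in R(c^0)$. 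Appending to $e$ a path from $\ra(e)$ back into $c^0$ and then around $c$ to $\so(e)$ gives a closed walk at $\so(e)$ using $e$; trimming it to a simple cycle while retaining $e$ and a vertex of $c$ produces a cycle $\ne c$ passing through a vertex of $c$, which is impossible. So $c$ has no exit in $E/(H,S)$, Condition (L) fails, and $P$ is not primitive.

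The point needing the most care is this last combinatorial step — turning the closed walk built from $e$ into an honest cycle distinct from $c$ and based at a vertex of $c$ — and, dually, the care in the forward direction that a path realising a lower bound in $E/(H,S)$ between two vertices of $E^0-H$ never uses a primed sink, so that condition (i) can legitimately be applied along it. Everything else is a formal consequence of Theorem \ref{graded_primitive_LPA}, Lemma \ref{lemma_quotient_graph}, and \cite[Theorem 4.7]{Gene_Bell_Ranga}; alternatively, one could organize the argument around cases \textup{(a)}--\textup{(d)} of Theorem \ref{graded_primitive_ideal}, verifying that only case \textup{(d)} with $S=B_H$ yields a non-primitive ideal.
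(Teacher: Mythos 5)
Your argument is correct, but the route through the key direction is genuinely different from the paper's. Both proofs begin identically: pass to the quotient graph, note that graded primitivity already gives downwards directedness and CSP of $(E/(H,S))^0$ (Theorem \ref{graded_primitive_LPA} with Lemma \ref{lemma_quotient_graph}), and reduce everything via \cite[Theorem 4.7]{Gene_Bell_Ranga} to Condition (L) of the quotient graph. For the direction ``not primitive $\Rightarrow$ exclusive-cycle form,'' the paper argues the contrapositive by running through cases (3a)--(3d) of Theorem \ref{graded_primitive_ideal} and checking in each case that every cycle of the quotient graph acquires an exit; you instead take a no-exit cycle of $E/(H,S)$, translate ``no exit'' into your conditions (i)--(ii) in $E$, and extract $H=E^0-R(c^0)$, $S=B_H$, and the exclusivity of $c$ directly from downwards directedness. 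Your route avoids the (2)$\Leftrightarrow$(3) part of Theorem \ref{graded_primitive_ideal} and identifies the obstruction concretely, at the price of bespoke combinatorics the paper does not need: the induction forcing paths inside $E^0-H$ issuing from $c^0$ to run along $c$, and, in the converse direction, extracting an honest cycle $\neq c$ from the closed walk through an exit --- a fact the paper simply asserts (``$c$ is a cycle without exits in the quotient graph''). That step, which you rightly flag as the delicate one, is sound: choose a shortest path from $\ra(e)$ back to $c^0$ and then follow $c$ to $\so(e)$; the resulting closed path has no repeated vertices except its base, so it is a cycle distinct from $c$ based at a vertex of $c$, contradicting exclusivity. (One small convention point: when you conclude that a cycle $d\neq c$ through a vertex of $c$ ``retraces'' $c$, you are identifying $c$ with its rotations, which is the intended reading of ``exclusive.'')
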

\begin{proof}
If $P=I(H,S)$ is graded primitive with $H=E^0-R(c^0)$ and $S=B_H$ for an exclusive cycle $c,$ then $c$ is a cycle without exits in the quotient graph. So, the quotient graph does not have Condition (L). By \cite[Theorem 4.7]{Gene_Bell_Ranga}, $L_K(E)/P$ is not a primitive ring. Thus, $P$ is not primitive. 

To show the converse, assume that $P=I(H,S)$ is a graded primitive ideal satisfying a condition of part (3) of Theorem \ref{graded_primitive_ideal} except part (3d) in the case when $S=B_H.$ Since part (2) of Theorem \ref{graded_primitive_ideal} holds in this case, $(E/(H,S))^0$ is downwards directed and has CSP by Lemma \ref{lemma_quotient_graph}. We show that $(E/(H,S))^0$  has Condition (L) by considering a cycle $c$ with $c^0\subseteq E^0-H.$ We show that either there is a vertex $v\in E^0-H$ such that $v\notin c^0\subseteq R(v)$ holds, or that $c$ is extreme in $E^0-H$, or that there is $u\in B_H$ such that $u'\notin c^0\subseteq R(u')$ holds in the quotient graph.  

If (3a) of Theorem \ref{graded_primitive_ideal} holds, let $\alpha$ be an infinite path as in condition (3a). Since $c^0$ is finite and $\alpha^0$ infinite, let $w\in\alpha^0$ be the largest (with respect to $\geq$) such that $u\notin c^0$ for any $u\in \alpha^0$ for which  $w\geq u.$ As $c^0\subseteq R(\alpha^0),$ there is $u\in \alpha^0$ such that $c^0\subseteq R(u).$ By taking $v\in \alpha^0$ such that $u\geq v$ and $w\geq v,$ we have that $v\notin c^0\subseteq R(u)\subseteq R(v).$   

If (3b) of Theorem  \ref{graded_primitive_ideal} holds for some $v\in E^0-H,$ then $v\notin c^0$ as $v$ does not emit edges to $E^0-H.$ So, $v\notin c^0\subseteq  E^0-H=R(v).$  

If (3c) of Theorem \ref{graded_primitive_ideal} holds for $v\in E^0-H,$ then if $v\notin c^0,$ we have that $v\notin c^0\subseteq  E^0-H=R(v),$ and if $v\in c^0,$ then $c$ is extreme in $E^0-H,$ so $c$ has an exit with the range in $E^0-H.$

If (3d) holds and $S\neq B_H$, let $d$ be the exclusive cycle from (3d) and let $u$ be the only element of $B_H-S.$ Since $u\in d^0$ and $d$ is exclusive, if $c\neq d,$ then $u\notin c^0\subseteq R(u).$ If $c=d,$ then $u'$ is not in $c^0$ in the quotient graph (since it is a sink) and $c^0\subseteq R(u').$ 

This shows that the quotient graph has Condition (L). Hence, $L_K(E)/P$ is a primitive ring by \cite[Theorem 4.7]{Gene_Bell_Ranga}. Thus, $P$ is a primitive ideal.  
\end{proof}

As a corollary, we obtain the following description of all (not necessarily graded) primitive ideals. 

\begin{corollary}
If $P$ is an ideal of $L_K(E),$ the following conditions are equivalent. 
\begin{enumerate}[\upshape(1)]
\item The ideal $P$ is primitive.
\item Exactly one of the following holds.
\begin{enumerate}[\upshape(a)]
\item $P=I(H, B_H)+\langle f(c)\rangle$ where $c$ is an exclusive cycle, $E ^0-H=R(c^0)$ and $f$ is an irreducible polynomial in $K[x, x ^{-1}].$

\item Exactly one of the following condition of Theorem \ref{graded_primitive_ideal} holds: (3a), (3b), (3c), or (3d) with $S$ such that $|B_H-S|=1.$  
\end{enumerate}
\end{enumerate}
\label{corollary_primitive_ideal}
\end{corollary}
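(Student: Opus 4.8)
The plan is to treat separately the cases where $P$ is graded and where it is not. The graded case is governed by Theorem~\ref{graded_primitive_ideal} together with Corollary~\ref{corollary_graded_primitive_and_primitive}, and the non-graded case is reduced to the classification of prime ideals (equivalently, to the annihilators of twisted Chen modules).

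For $(1)\Rightarrow(2)$, note that a primitive ideal is prime. Suppose first that $P$ is not graded. Then, by the classification of prime ideals of a Leavitt path algebra (\cite{Ranga_prime}, \cite{LPA_book}; this is the prime-ideal part, not the primitive-ideal statement questioned in Section~\ref{subsection_Ranga}), $P$ has the form $P=I(H,B_H)+\langle f(c)\rangle$ where $c$ is an exclusive cycle, $E^0-H=R(c^0),$ and $f$ is irreducible in $K[x,x^{-1}]$; equivalently, $P=\ann(\V^f_{[ccc\ldots]}).$ This is condition (2a). Suppose now that $P$ is graded, say $P=I(H,S).$ Then $L_K(E)/P$ is graded isomorphic to $L_K(E/(H,S)),$ and since this quotient is a primitive ring, \cite[Theorem~4.7]{Gene_Bell_Ranga} gives that $(E/(H,S))^0$ is downwards directed, satisfies Condition~(L), and has CSP; in particular it is downwards directed and has CSP, so by Theorem~\ref{graded_primitive_LPA} the quotient is graded primitive and hence $P$ is a graded primitive ideal. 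By Theorem~\ref{graded_primitive_ideal} one of (3a)--(3d) holds for $P$; since $P$ is also primitive, Corollary~\ref{corollary_graded_primitive_and_primitive} rules out (3d) with $S=B_H$ (which is precisely $P=I(E^0-R(c^0),B_H)$ for the exclusive cycle of (3d)). Thus (3a), (3b), (3c), or (3d) with $|B_H-S|=1$ holds, which is condition (2b).

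For $(2)\Rightarrow(1)$: if (2a) holds, put $\alpha=ccc\ldots,$ so that $H=E^0-R(\alpha^0)$ and, by the annihilator formula for a twisted module recalled in Section~\ref{subsection_list_of_Chen_modules}, $P=\ann(\V^f_{[\alpha]})$; since $\V^f_{[\alpha]}$ is a simple module, $P$ is primitive. If (2b) holds, then $P$ is graded primitive by Theorem~\ref{graded_primitive_ideal}, and the argument in the proof of Corollary~\ref{corollary_graded_primitive_and_primitive} — which establishes primitivity exactly for a graded primitive ideal satisfying (3a), (3b), (3c), or (3d) with $S\ne B_H$ — shows that $P$ is primitive. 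Finally, the two alternatives in (2) are mutually exclusive: the ideals of type (2a) are exactly the non-graded prime ideals of $L_K(E)$ by the classification cited above, whereas every ideal of type (2b) is of the form $I(H,S)$ and hence graded; and the four subcases of (2b) are mutually exclusive by Theorem~\ref{graded_primitive_ideal}.

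The step I expect to be the main obstacle is the non-graded part of $(1)\Rightarrow(2)$: one must be certain that a non-graded prime ideal of $L_K(E)$ has \emph{exactly} the form $I(H,B_H)+\langle f(c)\rangle$ with $c$ exclusive and with the second coordinate of the admissible pair equal to all of $B_H.$ This is the delicate region adjacent to \cite[Theorem~4.3]{Ranga_prime} examined in Section~\ref{subsection_Ranga}, so care must be taken to appeal only to the (valid) classification of prime ideals and the (correct) annihilator computations for Chen modules, and not to the problematic primitive-ideal statement. A more routine, but still necessary, point — already carried out inside the proof of Corollary~\ref{corollary_graded_primitive_and_primitive} and therefore available for citation — is that none of the graded primitive ideals described in (3a), (3b), (3c) of Theorem~\ref{graded_primitive_ideal} can coincide with the excluded form $I(E^0-R(c^0),B_H)$ for an exclusive cycle $c$ (for (3a) this is immediate, since then some $w\in E^0-H$ would satisfy $R(w)=R(\alpha^0)$, contradicting its defining property; for (3b) and (3c) a short argument using exclusivity of $c$ produces a cycle through a vertex of $c$ distinct from $c$).
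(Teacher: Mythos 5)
Your proof is correct and takes essentially the same route as the paper: split according to whether $P$ is graded, settle the non-graded case by the classification of non-graded prime ideals (equivalently, the annihilators $\ann(\V^f_{[\alpha]})=I(H,B_H)+\langle f(c)\rangle$ of the twisted Chen modules), and settle the graded case by Theorem \ref{graded_primitive_ideal} together with Corollary \ref{corollary_graded_primitive_and_primitive}. The paper's proof is just a two-line appeal to \cite[Theorems 3.12 and 4.3]{Ranga_prime} and Corollary \ref{corollary_graded_primitive_and_primitive}; your only additions are to spell out that a graded ideal which is primitive is graded primitive (via \cite[Theorem 4.7]{Gene_Bell_Ranga} and Theorem \ref{graded_primitive_LPA}) and to obtain the converse in case (2a) from $P=\ann(\V^f_{[\alpha]})$ rather than from the non-graded part of \cite[Theorem 4.3]{Ranga_prime}, both of which are sound.
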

\begin{proof}
If $P$ is non-graded, then (1) is equivalent to (2a) by \cite[Theorems 3.12 and 4.3]{Ranga_prime}.
If $P$ is graded, (1) is equivalent to (2b) by Corollary \ref{corollary_graded_primitive_and_primitive}. 
\end{proof}

Using Theorem \ref{graded_primitive_ideal} and Corollary \ref{corollary_primitive_ideal}, it is direct to construct a graded primitive ideal which is not primitive. 
For example,  let $E$ be the graph  
$\;\;\;\xymatrix{{\bullet}\ar@(lu,ld)}.$ For   $H=S=\emptyset$, $0=I(H,S)$ a graded primitive ideal, but $E$ does not have Condition (L), so $0$ is not primitive.

\section{Complete characterization of graded irreducible representations}\label{section_graded_irreducible}

\subsection{Defining a new type of graded irreducible representations.} \label{subsection_defining_N_c}
Throughout this section, $E$ is a graph with an exclusive cycle $c$ and $v$ is a vertex of $c.$ 
We define a branching system $X^v,$ or $X$ for short if it is clear which vertex we selected, as follows.

Let $Y=Y^v$ be the set of the basis elements of the path algebra $P_K(\widehat{E})$ of the extended graph $\widehat{E}$ which have the form $pq^*$ where $p,q$ are paths with $\ra(p)=\ra(q)\in c^0$ and $\so(q)=v.$ Note that the path $q$ in an element $pq^*\in Y$ is completely in $c$ because $c$ is exclusive.

We say that $pq^*\in Y$ is {\em not reduced} if $p$ and $q$ have positive lengths and they end with the same edge $e$. This defines a function on $Y$ which denote by $\red,$ and call the {\em reduction function}, by 
\[\red(ee^*q^*)=q^*\mbox{ and }\red(pee^*q^*)=\red(pq^*)\]
for an edge $e$ in $c$ and $ee^*q^*, pee^*q^*\in Y.$ We say that $pq^*\in Y$ is {\em reduced} if $\red(pq^*)=pq^*$.

Note that if $\red(pq^*)=rs^*$ then $\so(p)=\so(r)$ and $\so(q)=\so(s)=v.$ The following lemma shows a property of the reduction function which we need in Lemma \ref{lemma_branching_system}. 

\begin{lemma}
If $pq^*\in Y$ and $e\in E^1$ is such that $\so(p)=\ra(e),$ then $\red(e\red(pq^*))=\red(epq^*).$ 
\label{lemma_reduction}
\end{lemma}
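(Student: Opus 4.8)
The plan is to verify the identity $\red(e\red(pq^*)) = \red(epq^*)$ by analyzing whether $pq^*$ is reduced and whether the reduction cancels the edge $e$ that we prepend. Since $\red$ is defined recursively (stripping matching terminal edges from $p$ and $q$), the natural approach is induction on the length of $p$, the path on the left of the $*$. Throughout, one should note that $e\in E^1$ with $\so(p)=\ra(e)$ means $ep$ is again a legitimate path, and that $epq^*\in Y$ since $Y$ only constrains the $\ra$-endpoints and the source of $q$, both unchanged by prepending $e$.

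First I would dispose of the degenerate cases. If $p$ is a single vertex $v_0$ (so $|p|=0$), then $pq^*=q^*$ is automatically reduced (the ``not reduced'' condition requires both $p$ and $q$ to have positive length), hence $\red(pq^*)=q^*=pq^*$, and $\red(e\red(pq^*))=\red(eq^*)$. On the other side, $\red(epq^*)=\red(eq^*)$ directly, so the two sides agree. Similarly one checks the case where $p$ has positive length but $pq^*$ is reduced and $q$ has length $0$: then $q=v$, and $\red(epq^*)$ depends only on whether $p$ ends with the same edge as $e$'s... but here $q^*=v$ has no terminal edge to cancel against $p$, and prepending $e$ cannot create a matching terminal pair either, so both sides equal $epq^*$. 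The only subtle interaction is when $pq^*$ is reduced, $p$ ends with some edge $e'$, $q$ ends with some edge $f$, and these are distinct — prepending $e$ still cannot change the terminal edges of $p$ and $q$, so $epq^*$ is reduced, and $\red(e\red(pq^*)) = \red(epq^*) = epq^*$.

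The core of the argument is the inductive step when $pq^*$ is \emph{not} reduced, i.e. $p=p'e''$ and $q=q'e''$ for a common terminal edge $e''$ in $c$, with $p', q'$ of nonnegative length. Then by definition $\red(pq^*)=\red(p'{e''}{e''}^*{q'}^*)$, which unwinds (via the two defining clauses, depending on whether $p'$ is a vertex or longer) to $\red(p'{q'}^*)$. So $\red(e\red(pq^*)) = \red(e\red(p'{q'}^*))$. Meanwhile $epq^*=ep'e''{e''}^*{q'}^*$, and applying the second clause of the definition of $\red$ repeatedly strips the $e''{e''}^*$ pair, giving $\red(epq^*)=\red(ep'{q'}^*)$. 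Now apply the induction hypothesis to the pair $p'{q'}^*\in Y$ (which has $\so(p')=\ra(e)$ still, since $\so(p')=\ra(e'')=\so(e'')$... wait, one must check $\so(p')=\so(p)=\ra(e)$: indeed $\so(p'e'')=\so(p')$, so the hypothesis $\so(p)=\ra(e)$ gives exactly $\so(p')=\ra(e)$): this yields $\red(e\red(p'{q'}^*)) = \red(ep'{q'}^*)$, and the two computations above match.

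The main obstacle I anticipate is purely bookkeeping: the recursive definition of $\red$ has two clauses ($\red({e}{e}^*q^*)=q^*$ and $\red(p{e}{e}^*q^*)=\red(pq^*)$), so each time one strips a terminal pair one must branch on whether the remaining left path is a single vertex or not, and one must be careful that ``$p'{q'}^*\in Y$'' genuinely holds so the induction hypothesis applies — this needs $\ra(p')=\ra(q')\in c^0$, which follows because $\ra(p')=\so(e'')$ and $e''$ lies in $c$ so $\so(e'')\in c^0$, and likewise for $q'$; also $\so(q')=\so(q'e'')=\so(q)=v$. No genuinely hard idea is needed beyond setting up the induction cleanly and handling the base cases; the content is that prepending an edge on the far left commutes with cancellation happening at the far right, because the two operations act on disjoint ends of the word.
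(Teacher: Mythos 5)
Your proof is correct and follows essentially the same route as the paper's: induction on $|p|$, the base case $|p|=0$, the trivial reduced case, and in the non-reduced case stripping the common terminal edge and applying the induction hypothesis to $p'q'^*\in Y$. The only difference is cosmetic: in the reduced case with $|p|>0$ your subcase analysis showing both sides equal $epq^*$ is unnecessary, since $\red(pq^*)=pq^*$ already makes the two sides the same expression.
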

\begin{proof}
We prove the claim by induction on $|p|.$ If $|p|=0,$ then $p$ is the vertex $\ra(e)$ and $\red(pq^*)=q^*$ so $\red(e\red(pq^*))=\red(eq^*)=\red(epq^*).$ Assuming the induction hypothesis, let $p=rf$ for some $f\in E^1$ and path $r$ in which case $\so(r)=\so(p)=\ra(e).$ If $pq^*$ is reduced, the claim trivially holds since $\red(pq^*)=pq^*.$ If $pq^*$ is not reduced, then $q=sf$ for some path $s$ and, by induction hypothesis, 
\[\red(e\red(pq^*))=\red(e\red(rff^*s^*))=
\red(e\red(rs^*))=\red(ers^*)=
\red(erff^*s^*)=\red(epq^*).\]
\end{proof}

If $|p|$ denotes the length of $p,$ let us define the degree function on $Y$ by $\deg(pq^*)=|p|-|q|.$ Since reducing does not change the degree of $pq^*\in Y,$ we have that $\deg(pq^*)=\deg(\red(pq^*)).$ We define $X\subseteq Y$ in the following definition. Lemma \ref{lemma_branching_system} shows that $X$ is a branching system.

\begin{definition} Let $X=\{ pq^*\in Y\mid \, \red(pq^*)=pq^*\}$ be the set of elements of $Y$ which are reduced. For $w\in E^0$ and $e\in E^1,$ let 
\[X_w=\{pq^*\in X\mid \so(p)=w\},\;\;\;\;X_e=\{pq^*\in X \mid pq^*=\red(ers^*)\mbox{ for some }ers^*\in Y\},\]\[\sigma_e: X_{\ra(e)}\to X_e\;\mbox{ be given by }\;pq^*\mapsto \red(epq^*),\mbox{ and }\deg: X\to \Zset\mbox{ be given by }pq^*\mapsto |p|-|q|.\]
\label{definition_branching_for_N_c}
\end{definition}

\begin{lemma} The set $X$ is a graded branching system which is perfect and saturated. 
\label{lemma_branching_system}
\end{lemma}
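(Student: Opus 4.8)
The plan is to verify the four branching-system axioms for $X$ together with the graded condition, perfectness, and saturation, in that order, using Lemma \ref{lemma_reduction} as the key technical tool for the maps $\sigma_e$.

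First I would check that $\sigma_e$ is well-defined, i.e. that $\red(epq^*)\in X_e$ whenever $pq^*\in X_{\ra(e)}$. Since $\so(p)=\ra(e)$ and $\ra(p)=\ra(q)\in c^0$, the element $epq^*$ lies in $Y$, and $\red(epq^*)$ is reduced by definition of $\red$, hence in $X$; that it lies in $X_e$ is immediate from the description of $X_e$. For axiom (1), disjointness of the $X_w$ is clear because a reduced $pq^*$ determines $\so(p)=w$ uniquely; disjointness of the $X_e$ follows because if $rs^*=\red(epq^*)$ is nonempty then $r$ begins with $e$ (reduction only cancels from the right end), so $e$ is recovered from $rs^*$ — here one must note that $r$ is nonempty, which holds because $|r|=|p|+1\geq 1$ cannot be cancelled down to length $0$ unless $q$ had length $\geq 1$ too and cancellation consumed all of $p$; but $\red$ never removes the leading edge, so $r$ keeps its first edge $e$. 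Axiom (2), $X_e\subseteq X_{\so(e)}$, is again immediate since an element of $X_e$ is $\red(epq^*)$, whose underlying $p$-part starts with $e$, so its source is $\so(e)$.

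The heart is axiom (3): $\sigma_e\colon X_{\ra(e)}\to X_e$ is a bijection. Surjectivity is essentially the definition of $X_e$. For injectivity I would use Lemma \ref{lemma_reduction}: define $\tau_e\colon X_e\to X_{\ra(e)}$ by $rs^*\mapsto \red(e^*rs^*)$ (where $e^*r$ is the path $r$ with its leading edge $e$ removed — valid since elements of $X_e$ have $r$ beginning with $e$), and check $\tau_e\sigma_e=\mathrm{id}$ and $\sigma_e\tau_e=\mathrm{id}$. Concretely, for $pq^*\in X_{\ra(e)}$, reduced-ness of $pq^*$ means the leading edge of $p$ is not cancelled in forming $\red(epq^*)$ unless $q$ shares that edge, and iterating Lemma \ref{lemma_reduction} shows $\red(e^*\red(epq^*))=pq^*$; the reverse composition is similar. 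This is the step I expect to be the main obstacle, since one has to carefully track when $\red(epq^*)$ actually begins with $e$ versus when the new edge $e$ and the first edge of $q$ cancel — but in the latter case $q$ began with $e$, $p$ did not (as $pq^*$ is reduced), so no cancellation of $p$'s edges occurs and the bookkeeping closes.

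For axiom (4) and perfectness I would fix a vertex $w$ (regular or an infinite emitter) and show $X_w=\bigcup_{e\in\so^{-1}(w)}X_e$: the inclusion $\supseteq$ follows from axiom (2), and for $\subseteq$, given reduced $pq^*$ with $\so(p)=w$, the path $p$ has positive length (it cannot be a single vertex, because $\ra(p)\in c^0$ would force $q$ to also be that vertex, but then $\so(q)=v$ forces $w=v\in c^0$, and $v$ is a regular vertex emitting the cycle edge, so $p=v$ is handled by observing $v\cdot(vv^*)$ expands via (CK2) — more cleanly, I would argue directly that any reduced $pq^*\in X_w$ with $|p|\geq 1$ equals $\red(e\,p'q^*)$ with $e$ the first edge of $p$ and $p=ep'$, hence lies in $X_e$; and the single-vertex case $p=q=v$ is itself $\red(c_1\,(c_1^*v))$ — wait, one simply notes $vv^*=v\in X_v$ and expands using (CK2)). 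So for any vertex emitting edges, including infinite emitters, the union covers $X_w$, giving both (4) and perfectness. Saturation, $X=\bigcup_{w\in E^0}X_w$, holds because every $pq^*\in X$ has $\so(p)\in E^0$. Finally, the graded condition $\deg(\sigma_e(x))=\deg(x)+1$ is exactly the observation already made that $\deg(\red(epq^*))=\deg(epq^*)=(|p|+1)-|q|=\deg(pq^*)+1$, since reduction preserves degree.
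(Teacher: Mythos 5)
Your overall outline matches the paper's (verify the axioms, use Lemma \ref{lemma_reduction} for the inverse of $\sigma_e$, compute degrees for the grading), but there is a genuine gap: you repeatedly rely on the premise that an element of $X_e$ has its path part beginning with $e$ (``$\red$ never removes the leading edge''), and this is false precisely when $e$ is an edge of $c$. Already in the single-loop graph with $c=e$ at $v$, the element $(e^*)^n\in X_{\ra(e)}$ satisfies $\sigma_e\bigl((e^*)^n\bigr)=\red\bigl(e(e^*)^n\bigr)=(e^*)^{n-1}$, whose path part is the vertex $v$; in general, whenever $p$ is a vertex and $q$ ends with $e$, $\red(epq^*)=\red(ee^*s^*)=s^*$. (Cancellation in $\red$ happens at the junction between the end of the $p$-part and the end of $q$, so for reduced $pq^*$ it occurs exactly when $|p|=0$ and $q$ ends in $e$ --- not, as you describe, when ``$q$ began with $e$.'') This breaks several steps as written: your map $\tau_e$ (``strip the leading edge $e$ from $r$'') is undefined on such elements, so bijectivity of $\sigma_e$ is not established; your disjointness argument for the $X_e$'s rests on the same false premise; and, most seriously, your verification of axiom (4) at a vertex $w\in c^0$ fails, because the claim that the path part of an element of $X_w$ must have positive length is wrong (any $q^*$ with $\ra(q)=w$ and $|q|\ge 1$ lies in $X_w$), and the attempted repair via (CK2) is not available: (CK2) is a relation in $L_K(E)$, whereas axiom (4) is a statement about the sets $X_w$, $X_e$ prior to any module structure.

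The paper handles all of these degenerate elements at once by proving the single claim $X_e=X_{\so(e)}$ for every edge $e$ of $c$: exclusivity of $c$ (``no exit returns'') forces any $pq^*\in X_{\so(e)}$ with $|p|>0$ to begin with $e$, and the trivial-path elements are caught by $q^*=\red(ee^*q^*)\in X_e$. That claim simultaneously gives the disjointness of the $X_e$'s, axiom (4) and perfectness at vertices of $c^0$ (for $w\in R(c^0)-c^0$ your positive-length argument is fine, and $X_w=\emptyset$ off $R(c^0)$), and it makes the inverse $\sigma_e^{-1}\colon\red(epq^*)\mapsto\red(pq^*)$ work, with Lemma \ref{lemma_reduction} supplying $\red(e\red(pq^*))=\red(epq^*)$ for the composition $\sigma_e\sigma_e^{-1}$. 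If you insert this claim and invoke exclusivity where it is actually needed, the remaining parts of your plan --- saturation and the degree computation --- go through as you wrote them.
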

\begin{proof}
In the rest of the proof, let $u,w\in E^0$ and $e,f\in E^1.$ By Definition \ref{definition_branching_for_N_c}, $X_u\cap X_w=\emptyset$ if $u\neq w.$
If $e\neq f$ and $e,f$ are not it $c$, $X_e\cap X_f=\emptyset$ and $X_e\subseteq X_{\so(e)}$ also by Definition \ref{definition_branching_for_N_c}. If $e$ is in $c,$ we claim that $X_e=X_{\so(e)}.$ If $pq^*\in X_e,$ then $pq^*=\red(ers^*),$ so $\so(p)=\so(er)=\so(e).$ Hence, $pq^*\in X_{\so(e)}.$ Conversely, if $pq^*\in X_{\so(e)},$ we consider the cases when the length of $p$ is zero and when it is positive. 
In the first case, $pq^*=q^*=\red(ee^*q^*),$ so $pq^*\in X_e.$ In the second case, $\so(p)=\so(e)$ which implies that $p$ starts with $e$ since $c$ is exclusive. Hence, $pq^*=erq^*$ for some path $r.$ Since $pq^*$ is reduced, we have that $pq^*=\red(pq^*)=\red(erq^*)$ showing that $pq^*\in X_e.$ 

The relation $X_e=X_{\so(e)}$ for any edge $e$ of $c,$ implies that $X_e\cap X_f=\emptyset$ for any $e\neq f\in E^1.$ 

For $e\in E^1,$ we claim that the map $\sigma_e^{-1}: X_e\to X_{\ra(e)}$  given by 
\[\sigma_e^{-1}:\red(epq^*)\mapsto \red(pq^*)\]
is the inverse of $\sigma_e.$
If $pq^*\in X_{\ra(e)}$ then $\sigma_e^{-1}\sigma_e(pq^*)=\sigma_e^{-1}(\red(epq^*))=\red(pq^*)=pq^*$ and if $\red(epq^*)\in X_e,$ then $\sigma_e\sigma_e^{-1}(\red(epq^*))=\sigma_e(\red(pq^*))=\red(e\red(pq^*))=\red(epq^*)$ where the last equality holds by Lemma \ref{lemma_reduction}.   

For $w\in E^0$ which is not a sink, let $W=\bigcup_{e\in\so^{-1}(w)}X_e$. Since $X_e\subseteq X_{\so(e)}$ for every $e,$ $W\subseteq X_w.$  
We show $X_w\subseteq W.$ If $w$ is a vertex which is not in $R(c^0),$ then $\emptyset=X_w\subseteq W=\emptyset.$  
If $w\in R(c^0)-c^0,$ then any $pq^*\in X_w$ is of the form $erq^*$ for some path $r$ and some $e\in\so^{-1}(w).$ Since $erq^*=\red(erq^*)\in X_e,$ we have that $pq^*\in X_e$ and so $X_w\subseteq W.$ If $w\in c^0,$ then $\so^{-1}(w)$ contains an edge, say $e,$ which is in $c.$ As we showed before, $X_e=X_{\so(e)}.$ So, $X_w=X_e\subseteq W.$  

This shows that $X$ is a perfect branching system. Since $pq^*\in X$ is in $X_{\so(p)},$ $X$ is saturated. The branching system $X$ is graded as, for $pq^*\in X,$
\[\deg(\sigma_e(pq^*))=\deg(\red(epq^*))=\deg(epq^*)=|ep|-|q|=|p|-|q|+1=\deg(pq^*)+1.\]
\end{proof}

By Lemma \ref{lemma_branching_system}, $M(X)$ is the $K$-linear span with basis $X$ and with the $L_K(E)$-module structure induced by 
\begin{center}
\begin{tabular}{l}
$w\cdot pq^*=pq^*$  if $\so(p)=w$ and $w\cdot pq^*=0$ otherwise, \\
$e\cdot pq^*=\sigma_e(pq^*)=\red(epq^*)$ if $\so(p)=\ra(e)$ and $e\cdot pq^*=0$ otherwise, and \\
$e^*\cdot pq^*=\sigma_e^{-1}(pq^*)=\red(rs^*)$  if $pq^*=\red(ers^*)$ for some $ers^*\in Y$ and  $e^*\cdot pq^*=0$ otherwise 
\end{tabular}
\end{center}
for $w\in E^0, e\in E^1,$ and $pq^*\in X.$
We use $\N^v_c$ to denote $M(X).$ If it is clear that $X=X^v,$ we use only $\N_c$ for $\N^v_c.$ 
 
The following lemma focuses on a property of $\N_c$ which we need in Lemma \ref{lemma_describing_homogeneous_element}. 
\begin{lemma}
If $pq^*\in Y$ and $\so(p)\in c^0,$ then $p^*\cdot \red(pq^*)=q^*.$ 
\label{lemma_ghost_action} 
\end{lemma}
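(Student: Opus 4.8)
The plan is to induct on the length $|p|$, at each step stripping off the \emph{first} edge of $p$, and to push the recursion through the single-ghost-edge action formula built into the module structure of $\N_c$. Because stripping the first edge produces paths whose source need not lie on $c$, I would in fact prove the apparently stronger statement that $p^{*}\cdot\red(pq^{*})=q^{*}$ holds for \emph{every} $pq^{*}\in Y$; the hypothesis $\so(p)\in c^{0}$ is never used, and Lemma \ref{lemma_ghost_action} is the special case in which it happens to hold.

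Before the induction I would isolate the one computational ingredient: for $ab^{*}\in Y$ and an edge $e$ with $\so(a)=\ra(e)$, one has $eab^{*}\in Y$ and
\[e^{*}\cdot\red(eab^{*})=\red(ab^{*}).\]
This is assembled from material already at hand. Membership $eab^{*}\in Y$ is immediate from $\ra(ea)=\ra(b)\in c^{0}$ and $\so(b)=v$, so $\red(eab^{*})\in X_{e}$ by Definition \ref{definition_branching_for_N_c}. The note preceding Lemma \ref{lemma_reduction} gives $\red(ab^{*})\in X_{\ra(e)}$, and Lemma \ref{lemma_reduction} then gives $\sigma_{e}(\red(ab^{*}))=\red(e\,\red(ab^{*}))=\red(eab^{*})$. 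Since $\sigma_{e}$ is a bijection by Lemma \ref{lemma_branching_system}, applying $\sigma_{e}^{-1}$ and recalling that $e^{*}$ acts as $\sigma_{e}^{-1}$ on $X_{e}$ yields the displayed identity.

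The induction itself is then short. For $|p|=0$ the condition $pq^{*}\in Y$ forces $p=\ra(q)$, so $pq^{*}=q^{*}$ is already reduced and $p^{*}\cdot q^{*}=\ra(q)\cdot q^{*}=q^{*}$ because $q^{*}\in X_{\ra(q)}$. For $|p|\ge 1$ I would write $p=ep'$ with $e$ the first edge, note $p'q^{*}\in Y$ and $p^{*}=(p')^{*}e^{*}$, apply the displayed identity with $a=p'$ and $b=q$ (so that $eab^{*}=pq^{*}$) to get $e^{*}\cdot\red(pq^{*})=\red(p'q^{*})$, and finish with the induction hypothesis applied to $p'q^{*}$. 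Equivalently one may simply apply the displayed identity $|p|$ times in succession to $\red(pq^{*})$.

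I do not expect a genuine obstacle: the only thing to watch is bookkeeping — verifying that each intermediate element ($p'q^{*}$, $eab^{*}$, $q^{*}$) lies in whatever set is needed for $\red$, for the action of $e^{*}$, and for $\sigma_{e}^{-1}$ to be applicable where they are invoked. All of this follows routinely from $\ra(p)=\ra(q)\in c^{0}$, $\so(q)=v$, and the note before Lemma \ref{lemma_reduction}; exclusiveness of $c$ enters only through the already-recorded observation that the path $q$ lies entirely on $c$, which is precisely what makes the reduction function unambiguous.
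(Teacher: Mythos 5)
Your proof is correct and takes essentially the same route as the paper's: induction on $|p|$ peeling off the first edge of $p$, with the key step $e^*\cdot \red(epq^*)=\sigma_e^{-1}(\red(epq^*))=\red(pq^*)$ justified via Lemma \ref{lemma_reduction} and the bijectivity of $\sigma_e$ from Lemma \ref{lemma_branching_system}. Your observation that the hypothesis $\so(p)\in c^0$ is never needed (so the identity holds for all $pq^*\in Y$) is a harmless mild strengthening; the paper's induction implicitly stays within that hypothesis anyway, since exclusivity of $c$ forces such $p$ to lie entirely on $c$.
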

\begin{proof}
We use induction on the length $|p|.$ If $p$ is a vertex $w$ in $c^0,$ then $w=\ra(q)$ and $p^*\cdot \red(pq^*)=w\cdot \red(q^*)=w\cdot q^*=q^*.$ Assuming the induction hypothesis for $pq^*\in Y$, let us show the claim for $epq^*\in Y$ where $e\in E^1$ is with $\so(e)\in c^0.$ In this case, 
\[(ep)^*\cdot \red(epq^*)=p^*\cdot(e^*\cdot \red(epq^*))=p^*\cdot \sigma_e^{-1}(\red(epq^*))=p^*\cdot \red(pq^*)=q^*.\]   
\end{proof}

We consider the form of a homogeneous element of $\N_c$ next. Let $a=\sum_{i=1}^n k_i\red(p_iq_i^*)$ be a homogeneous element of $\N_c.$  Since each $p_i$ ends in $c,$ let $p_i=t_ic_i$ where $\ra(t_i)\in c^0$ and every other vertex of $t_i$ is not in $c^0.$ This enables us to partition the set $\{1,\ldots, n\}$ into $I_j, j=1,\ldots k$ for some $k\in\{1,\ldots n\},$ such that $l\in I_j$ if $t_l=t_j.$ So, we let 
\[a=\sum_{j=1}^k a_j\mbox{ where }a_j=\sum_{l\in I_j}k_l\red(p_lq_l^*)=\sum_{l\in I_j}k_l\red(t_jc_lq_l^*).\] 
Since $a$ is homogeneous, $a_j$ is homogeneous for every $j=1,\ldots,k.$ This implies that $\deg(\red(p_lq_l^*))=\deg(\red(t_jc_lq_l^*))=|t_j|+|c_l|-|q_l|$ is the same for every $l\in I_j.$ In particular, $|c_l|-|q_l|=|c_j|-|q_j|$ for every $l\in I_j.$ 

If $|c_j|-|q_j|\geq 0,$ let $d_j$ be the part of $c$ ending at $v$ of length $|c_j|-|q_j|.$ If $|c_j|-|q_j|<0,$ 
let $f_j$ be the part of $c$ starting at $v$ of length $|q_j|-|c_j|$ and let $d_j=f_j^*.$ So, in either case, $\red(c_lq_l^*)=\red(c_jq_j^*)=d_j$ and
$\red(p_lq_l^*)=\red(t_jc_lq^*_l)=t_j\red(c_lq^*_l)=t_jd_j$ for any $l\in I_j.$  Denote $\sum_{l\in I_j} k_l$ by $k_j$ so that 
\[a=\sum_{j=1}^k a_j\;\mbox { and }\;a_j=\sum_{l\in I_j} k_l t_jd_j=k_j t_jd_j\mbox{ for all }j=1,\ldots, k.\]
We use this notation for the following lemma. 

\begin{lemma}
If $a=\sum_{i=1}^n k_i\red(p_iq_i^*)=\sum_{j=1}^k a_j\in \N_c$ is homogeneous with $a_j=k_jt_jd_j,$ then 
$p_j^*\cdot a=k_jq_j^*$ for $j=1,\ldots, k.$ Thus, if $p_j^*\cdot a= 0$ for all $j=1,\ldots k,$ then $a=0.$ 
\label{lemma_describing_homogeneous_element} 
\end{lemma}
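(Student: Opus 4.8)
The plan is to compute $p_j^* \cdot a$ directly by splitting $a$ according to the partition $\{1,\dots,n\}=\bigcup_{j=1}^k I_j$ and showing that exactly one summand $a_j$ contributes. First I would fix $j$ and recall that $a=\sum_{m=1}^k a_m$ with $a_m = k_m t_m d_m$, where $t_m$ is the initial segment of every $p_l$, $l\in I_m$, that stays off $c^0$ until its range, and $d_m$ is the reduced form of $c_l q_l^*$ (a path in $c$ ending at $v$ if $|c_m|-|q_m|\ge 0$, or of the form $f_m^*$ otherwise). The key point is that $p_j^* = c_j^* t_j^*$, so $p_j^* \cdot a = c_j^* \cdot (t_j^* \cdot a)$, and $t_j^*$ acting on the basis element $\red(p_l q_l^*) = t_l d_l$ is governed by the action of ghost edges: since each $t_l$ is a genuine path off $c^0$ (so no reduction occurs when multiplying by $t_l^*$ on the left of the reduced element — the edges of $t_l$ are not in $c$), we get $t_j^* \cdot (t_l d_l) = d_l$ if $t_l = t_j$ (i.e. $l\in I_j$) and $0$ otherwise, because $t_j^* e = 0$ whenever the last edge of $t_j$ differs from the first edge we would need to cancel. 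Hence $t_j^* \cdot a = t_j^* \cdot a_j = k_j d_j$, where I am writing $k_j = \sum_{l\in I_j} k_l$ as in the paragraph preceding the lemma.

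Next I would handle the action of $c_j^*$ on $k_j d_j$. Here I would invoke Lemma \ref{lemma_ghost_action}: since $\so(c_j)\in c^0$ and $c_j q_j^* \in Y$ with $\red(c_j q_j^*) = d_j$, that lemma gives exactly $c_j^* \cdot d_j = c_j^* \cdot \red(c_j q_j^*) = q_j^*$. Therefore $p_j^* \cdot a = c_j^* \cdot (k_j d_j) = k_j q_j^*$, which is the first assertion. One should double-check the degenerate cases — when $c_j$ or $t_j$ has length $0$ — but these are covered by the base cases of Lemmas \ref{lemma_ghost_action} and \ref{lemma_reduction} and require no separate argument.

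For the final assertion, suppose $p_j^* \cdot a = 0$ for all $j=1,\dots,k$. Then $k_j q_j^* = 0$ in $\N_c$ for every $j$; since $q_j^*$ is a nonzero basis element of $\N_c$, this forces $k_j = 0$ for all $j$. But $a_j = k_j t_j d_j$, so every $a_j = 0$, and hence $a = \sum_{j=1}^k a_j = 0$.

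I expect the main obstacle to be the bookkeeping around the action of $t_j^*$: one must verify carefully that multiplying a reduced element $t_l d_l$ on the left by the ghost path $t_j^*$ produces no unexpected reductions and annihilates precisely the basis elements with $t_l \ne t_j$. This uses that the $t_l$'s consist of edges outside the cycle $c$ (so the reduction function, which only cancels edges of $c$, is inert here) together with the (CK1) relations $e^* f = \delta_{ef}\ra(e)$ applied edge-by-edge down $t_j$; once this is set up, the rest follows cleanly from Lemma \ref{lemma_ghost_action}.
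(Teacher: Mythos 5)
Your proposal is correct and follows essentially the same route as the paper: the paper kills the cross terms by noting $p_j^*p_l=0$ in $L_K(E)$ for $l\notin I_j$ (equivalent to your observation that $t_j^*$ annihilates $t_ld_l$ when $t_l\neq t_j$, both resting on the fact that no $t_l$ can be a proper prefix of another $t_m$ because the interior vertices of each $t_i$ avoid $c^0$), and then computes $p_j^*\cdot a_j=c_j^*t_j^*\cdot k_jt_jd_j=k_j\,c_j^*\cdot\red(c_jq_j^*)=k_jq_j^*$ via Lemma \ref{lemma_ghost_action}, exactly as you do. Your concluding step (since $q_j^*$ is a nonzero basis element, all $k_j=0$ and hence $a=0$) is identical to the paper's.
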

\begin{proof}
Note that
$p_j^*p_l=0$ for $l\notin I_j$ holds in $L_K(E),$ so that $p_j^*\cdot a_l=0$ for $l\notin I_j.$ Thus, 
\[p_j^*\cdot a=p_j^*\cdot a_j+p_j^*\cdot \sum_{l=1, l\neq j}^k a_l= p_j^*\cdot a_j+0=p_j^*\cdot a_j=c_j^*t_j^*\cdot k_jt_jd_j=k_j\, c_j^*\cdot d_j=k_j\, c_j^*\cdot \red(c_jq_j^*)=k_jq_j^*\]
where the last equality holds by Lemma \ref{lemma_ghost_action}.
Since $q_j^*$ is a basis element of $N_c,$ it is nonzero and, if $k_jq_j^*=p_j^*\cdot a=0,$ then $k_j=0.$ Thus, if $p_j^*\cdot a=0$ for all $j=1,\ldots k,$ then $k_j=0$ for all $j=1,\ldots, k$ which implies that $a=\sum_{j=1}^k a_j=0.$
\end{proof}

Lemma \ref{lemma_describing_homogeneous_element} is key to proving part (2) of the following proposition. 

\begin{proposition}
Let $H=E^0-R(c^0).$ 
\begin{enumerate}[\upshape(1)]
\item $\ann(\N_c)=I(H, B_H).$

\item $\N_c$ is graded simple and not simple. 

\item If $c$ has more than one vertex and if $w\neq v$ are vertices of $c,$ then $\N_c^v$ and $\N_c^w(n)$ are graded isomorphic where $n$ is the length of the path from $v$ to $w$ in $c$. 

\item If $d$ is an exclusive cycle and $w\in d^0$, the modules $\N^v_c$ and $\N^w_d$ are graded isomorphic if and only if $c=d$ and $v=w$. The modules $\N^v_c$ and $\N^w_d$ are isomorphic if and only if $c=d.$ 

\item The module $\N_c$ is not isomorphic to any of the Chen modules (see section \ref{subsection_list_of_Chen_modules}). Moreover, the annihilator
of $\N_c$ is different from the annihilator of any of the Chen modules.
\end{enumerate}
\label{proposition_N_c}
\end{proposition}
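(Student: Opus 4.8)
Here is the plan I would follow to prove Proposition~\ref{proposition_N_c}.

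\smallskip
\noindent\emph{Part (1).}
Since $\N_c$ is a graded module by Lemma~\ref{lemma_branching_system}, $\ann(\N_c)$ is a graded ideal, hence $\ann(\N_c)=I(H',S')$ for an admissible pair, where $H'$ is the set of vertices lying in $\ann(\N_c)$. A vertex $w$ acts as a nonzero operator on $\N_c$ precisely when $X_w\neq\emptyset$, i.e. precisely when $w\geq u$ for some $u\in c^0$, so $H'=E^0-R(c^0)=H$. It then remains to check that $u^H\in\ann(\N_c)$ for every $u\in B_H$, which will force $S'=B_H$. The crucial observation here, using that $c$ is exclusive, is that for $u\in E^0-H=R(c^0)$ every edge $e\in\so^{-1}(u)$ with $\ra(e)\notin R(c^0)$ has $X_e=\emptyset$ (anything in $X_e$ would be a reduced word starting with $e$ and ending in $c^0$, forcing $\ra(e)\in R(c^0)$); combined with perfectness of $X$, the operator $\sum_e ee^*$ occurring in $u^H$ — the sum over the finitely many $e$ with $\ra(e)\in R(c^0)$ — therefore acts as the identity on each basis element of $X_w$ with $w$ in the range of $\so^{-1}(u)$. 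Hence $u^H\cdot\N_c=0$ and $\ann(\N_c)=I(H,B_H)$.

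\smallskip
\noindent\emph{Parts (2) and (5).}
For graded simplicity I would take $0\neq a\in\N_c$ homogeneous; by Lemma~\ref{lemma_describing_homogeneous_element} there is $j$ with $p_j^*\cdot a=k_jq_j^*\neq 0$, so $q_j^*$ lies in the (graded) submodule generated by $a$. Acting on $q_j^*$ by the path $q_j$ collapses it to the vertex basis element $v$, and from $v$ one reaches every reduced $rs^*\in X$ by acting first by $s^*$ and then by $r$; thus any nonzero graded submodule is all of $\N_c$. For ``not simple'', note that by part (1) we have $\ann(\N_c)=I(H,B_H)$ with $H=E^0-R(c^0)$ for the exclusive cycle $c$; this ideal is graded primitive (being the annihilator of the graded simple module $\N_c$), so by Corollary~\ref{corollary_graded_primitive_and_primitive} it is \emph{not} primitive, whereas a simple module would have a primitive annihilator. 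The same comparison proves part (5) at once: every Chen module is simple, hence its annihilator is a primitive ideal, so $\ann(\N_c)=I(H,B_H)$ — not primitive — cannot equal any of them, and a fortiori $\N_c$ is not isomorphic to any Chen module.

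\smallskip
\noindent\emph{Part (3).}
Let $\gamma$ be the path inside $c$ from $v$ to $w$, of length $n$. The plan is to define $\phi\colon\N^v_c\to\N^w_c(n)$ on basis elements by $\phi(pq^*)=\red(pq^*\gamma)$, the reduction now being taken in $Y^w$ (the composition makes sense because $q^*$ ends at $\so(q)=v$ and $\gamma$ starts at $v$, and $\gamma$ ends at $w$). Since $q$ lies inside $c$, it is an initial segment of $c^\infty$ based at $v$, so $\phi(pq^*)$ equals $p\,\tilde q^*$ when $q=\gamma\tilde q$ and equals the forward path $p\hat\gamma$ when $\gamma=q\hat\gamma$; in both cases a length count gives $\deg\phi(pq^*)=|p|-|q|+n$, which is exactly what is required of a graded map into the shift $\N^w_c(n)$. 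That $\phi$ is an $L_K(E)$-module map follows because reduction is confluent, so left multiplication commutes with the ``right multiplication by $\gamma$'' that defines $\phi$; and $\psi(x)=\red(x\gamma^*)$ (where $\gamma^*\colon w\to v$) is a two-sided inverse, using that $\gamma^*\gamma$ and $\gamma\gamma^*$ act trivially after reduction on the relevant basis elements.

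\smallskip
\noindent\emph{Part (4) and the main obstacle.}
Any isomorphism $\N^v_c\cong\N^w_d$ (graded or not) gives $\ann(\N^v_c)=\ann(\N^w_d)$, so by part (1) $R(c^0)=R(d^0)$. Using the standard fact that for an exclusive cycle every closed path based at one of its vertices is a power of that cycle (an edge of $c^0$ not on $c$ cannot have range in $R(c^0)$), tracing a closed path $u\to x\to y\to u$ with $u,y\in c^0$ and $x\in d^0$ shows $x\in c^0$, so $c^0\cap d^0\neq\emptyset$, and exclusivity of $c$ at a common vertex forces $c=d$; conversely $c=d$ makes $\N^v_c$ and $\N^w_c$ ungraded isomorphic via part (3). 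Finally, assuming $c=d$, a graded isomorphism $f\colon\N^v_c\to\N^w_c$ would send the degree-$0$ element $v$ into the degree-$0$ part of $v\N^w_c$; but a basis element $p'q'^*\in X^w$ with $\so(p')=v$ has $p'$ a prefix of $c^\infty$ based at $v$ and $q'$ one based at $w$, ending at the same vertex, and together with $|p'|=|q'|$ this forces $v=w$. So if $v\neq w$ that degree-$0$ part is $0$, whence $f(v)=0$ and $f=0$, a contradiction. I expect the two places needing the most care to be part (3) — matching the reduction functions for different base vertices, handling the two cases in the formula for $\phi$, and checking the degree shift is exactly $n$ — and the graph lemma underlying part (4).
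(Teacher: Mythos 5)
Your proposal is correct, and for parts (1)--(3) it runs essentially parallel to the paper: in (1) you identify the vertex set of the graded annihilator and then check $u^H\in\ann(\N_c)$ for $u\in B_H$, except that where the paper argues by cases ($u\in c^0$ versus $u\notin c^0$) and exhibits the single relevant edge, you package the same computation via perfectness of $X$ together with the observation that $X_e=\emptyset$ whenever $\ra(e)\in H$ (a small wording slip: the operator acts as the identity on basis elements of $X_u$, not of ``$X_w$ with $w$ in the range of $\so^{-1}(u)$''); in (2) both arguments hinge on Lemma \ref{lemma_describing_homogeneous_element} and on the non-primitivity of $I(H,B_H)$ from Corollary \ref{corollary_graded_primitive_and_primitive}; and in (3) your map $\phi(pq^*)=\red(pq^*\gamma)$ is just the inverse of the paper's $f_{vw}:pq^*\mapsto\red(pq^*c_{vw}^*)$, with the same degree bookkeeping, the paper routing the verification through an isomorphism of branching systems in the sense of Chen. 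The genuine differences are in (4) and (5). For (5) the paper compares $\ann(\N_c)$ case by case against the explicit annihilator formulas of each Chen type, whereas you argue uniformly: every Chen module is simple, so its annihilator is a primitive ideal, while $\ann(\N_c)=I(H,B_H)$ is not primitive by Corollary \ref{corollary_graded_primitive_and_primitive}; this is shorter and is legitimate within the paper's framework (it is the same reasoning the paper already uses to conclude that $\N_c$ is not simple). For (4) you are in fact more complete than the paper: the paper deduces from (3) that $\N^v_c$ is graded isomorphic to a nonzero shift of $\N^w_c$ and then asserts parenthetically that the two are not graded isomorphic, whereas you prove this directly by showing that for $v\neq w$ the degree-zero part of $v\cdot\N^w_c$ vanishes (using that a path from $v\in c^0$ back into $c^0$ must lie on $c$ since $c$ is exclusive), so a degree-preserving isomorphism would kill the basis element $v$; your graph-theoretic step showing $R(c^0)=R(d^0)\Rightarrow c=d$ matches the paper's use of exclusivity.
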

\begin{proof}
The set $H$ is hereditary and saturated since it has the form $E^0-R(V)$ for some $V\subseteq E^0.$

(1) Note that $\ann(N_c)$ is graded since $N_c$ is graded. Thus, $\ann(N_c)=I(G, S)$ for some admissible pair $(G, S)$. If $w\in H,$ then $w\cdot pq^*=0$ for all $pq^*\in X$ since $\so(p)\in E^0-H,$ so $w\cdot a=0$ for all $a\in N_c.$ Thus, $H\subseteq G.$  If $w\notin H,$ then $w\in R(c^0)$ and if $p$ is a path from $w$ to $v,$ then $p\in \N_c$ and $w\cdot p=p\neq 0,$ so $w\notin G.$ Thus, $H=G.$ If $w\in B_H,$ we claim that $w^H\cdot pq^*=0$ for any $pq^*\in X.$
Since $w^H\cdot pq^*=0$ for any $pq^*\in X-X_w,$ it is sufficient to consider $w^H\cdot pq^*$ for $pq^*\in X_w.$
We consider the cases $w\notin c^0$ and $w\in c^0.$ 
In the first case, the length of $p$ is positive. Let $e$ be the first edge of $p$ and $p=er$ for a path $r$ so that 
$ee^*\cdot pq^*=e\cdot \sigma_e^{-1}(pq^*)=e\cdot rq^*=\red(erq^*)=\red(pq^*)=pq^*.$ As $e\in \so^{-1}(w)\cap \ra^{-1}(E^0-H),$ we have that 
\[w^H\cdot pq^*=(w-\sum_{f\in \so^{-1}(w)\cap \ra^{-1}(E^0-H)}ff^*)\cdot pq^*=w\cdot pq^*-ee^*\cdot pq^*=pq^*-pq^*=0.\]
In the second case, the set $\so^{-1}(w)\cap \ra^{-1}(E^0-H)$ consists of one edge since $c$ is exclusive. Let $e$ be that edge. Since $e$ is in $c,$ $X_w=X_e,$ and so $pq^*\in X_e.$ Let $pq^*=\red(ers^*).$ Then $ee^*\cdot pq^*=e\cdot\sigma_e^{-1}(\red(ers^*))=e\cdot\red(rs^*)=\red(e\red(rs^*))=\red(ers^*)$ where the last equality holds by Lemma \ref{lemma_reduction}. As $\red(ers^*)=pq^*,$ we have that 
$ee^*\cdot pq^*=pq^*$ in this case also. Hence, 
$w^H\cdot pq^*=(w-ee^*)\cdot pq^*=w\cdot pq^*-ee^*\cdot pq^*=pq^*-pq^*=0.$ This shows that $w^H\in \ann(\N_c)$ and so $w\in S.$

(2) If $M$ is a nontrivial graded submodule of $\N_c,$ we show that $M=\N_c$ by showing that $v$ is in $M.$ Since $M$ is nontrivial and graded, there is a nonzero homogeneous element $a$ in $M.$ Using the notation we introduced earlier, let  $a=\sum_{i=1}^n k_i\red(p_iq_i^*)=\sum_{j=1}^k a_j$ where $a_j=k_jt_jd_j$ for $j=1,\ldots, k.$ Since $a\neq 0,$ there is $j\in \{1,\ldots, k\}$ such that $k_jq_j^*=p_j^*\cdot a\neq 0$
by Lemma \ref{lemma_describing_homogeneous_element}. This implies that $k_j\neq 0$ and so  
\[v=k_j^{-1}  k_jv = k_j^{-1}  k_j \red(q_jq_j^*)=k_j^{-1}q_j\cdot k_jq_j^*=k_j^{-1}q_j\cdot (p_j^*\cdot a)=k_j^{-1}p_jq_j^*\cdot a\in M.\]

This also shows that $\ann(\N_c)=I(H, B_H)$ is graded primitive. By Corollary \ref{corollary_graded_primitive_and_primitive}, $\ann(\N_c)$ is not primitive, so $\N_c$ is not simple. 

(3) If $c_{vw}$ is the path from $v$ to $w$ in $c,$ let  $f_{vw}: X^w\to X^v$ be given by $pq^*\mapsto \red(pq^*c_{vw}^*).$ Then 
$f(X^w_u)=X^v_u$ for $u\in E^0$ and  $f(X^w_e)=X^v_e$ for  $e\in E^1.$
If $pq^*\in X^w_{\ra(e)}$ for some $e\in E^1,$ then 
\[f\sigma_e^w(pq^*)=f(\red(epq^*))=\red(\red(epq^*)c_{vw}^*)=
\red(epq^*c_{vw}^*)=\sigma_e^v(pq^*c_{vw}^*)=\sigma_e^v(f(pq^*)).\]
This shows that the branching systems are isomorphic in the sense of \cite[Section 5]{Chen} and, hence, $f$ extends to an isomorphism of $\N^w_c$ and $\N^v_c.$ If $|c_{vw}|=n,$ $f$ maps an element of the $m+n$-component of $\N^w_c$ to an element of the $m$-component of $\N_c^v$, so $f$ is a graded isomorphism of $\N^w_c(n)$ and $\N_c^v.$  

(4) If $c\neq d,$ then $R(v)\neq R(w)$ because the cycles $c$ and $d$ are exclusive, so $E^0-R(v)\neq E^0-R(w).$ Thus, $\ann(\N^v_c)\neq \ann(\N^w_d)$ which implies that $\N^v_c$ and $\N^w_d$ are not isomorphic.
If $c=d$ but $v\neq w,$ then $\N^v_c$ is graded isomorphic to $\N^w_d(n)$ for some nonzero $n$ by part (3). This implies that $\N^v_c$ and $\N^w_d$ are isomorphic (but not graded isomorphic). The converse follows by part (3).

(5) The graded module $\N_c$ is not isomorphic to any of the Chen modules since they are simple and, by part (2), $\N_c$ is not. Next, we show that $\ann(\N_c)$ is different from the annihilator of any other Chen module. If an infinite path $\alpha$ is rational, $\ann(\V^f_{[\alpha]})$
is not graded, so it is different from $\ann(\N_c).$ 
The same argument applies for $\ann(\V_{[\alpha]})$ if $\alpha$ is rational and it ends in an exclusive cycle. If $\alpha$ is rational and it ends in a cycle $d$ which is not exclusive, then $d\neq c$ and either $d^0\subseteq H$ or $v\notin R(\alpha^0).$ In any case, $\ann(\V_{[\alpha]})\neq\ann(\N_c).$ If $\alpha$ is irrational, then $v\notin \alpha^0$ and $R(v)\neq R(\alpha^0)$ since $c$ is exclusive. Thus, $\ann(\V_{[\alpha]})\neq \ann(\N_c).$  
If $E$ has a sink $w$, $\ann(\N_w)$ satisfies (3b) and $\ann(\N_c)$ satisfies (3d) of Theorem \ref{graded_primitive_ideal}. If $u$ is an infinite emitter which is not in $c$ then $R(v)\neq R(u),$ so $\ann(\N_c)\neq \ann(\N_u).$  
If $u$ is an infinite emitter which is in $c,$ then $u$ emits only one edge to $E^0-H$ since $c$ is without exits in $E^0-H.$ Thus, $u\in B_H$ and $\ann(\N_c)=I(H, B_H)\neq I(H, B_H-\{u\})=\ann(\N_{u\in B_H}).$ 
\end{proof}

\begin{remark}
Let $X^c=\bigcup_{v\in c^0} X^v.$ Lemmas \ref{lemma_reduction} and \ref{lemma_branching_system} and part (1) of Proposition \ref{proposition_N_c} still hold for $X^c$ and $M(X^c)$. However,
since  $M(X^c)=\bigoplus_{v\in c^0}M(X^v)=\bigoplus_{v\in c^0}\N_c^v,$
part (2) of Proposition \ref{proposition_N_c} holds if and only if $c^0$ consists of only one vertex.
\end{remark}

\subsection{Characterization of graded irreducible representations}\label{subsection_characterization}
In light of part (2) of Proposition \ref{proposition_N_c}, we introduce the term {\em graded Chen module} to denote Chen modules which are graded and modules of the type $\N_c.$ This list of modules is exhaustive in the sense detailed in Theorem \ref{graded_simple}. 
 
\begin{theorem}
If $M$ is a graded simple $L_K(E)$-module then the annihilator of $M$ is the same as the annihilator of exactly one of the graded Chen modules below.
\begin{enumerate}[\upshape(1)]
\item {\em The strictly-decreasing-infinite-path type:} $\V_{[\alpha]}$ where $\alpha$ is a strictly decreasing infinite path such that $ R(v)\subsetneq R(\alpha^0)$ for any $v\in R(\alpha^0).$

\item {\em The relative-sink types:} $\N_v$ where $v$ is a sink or $\N_{v\emptyset}$ where $v$ is an infinite emitter with $\ra(\so^{-1}(v))\cap R(v)=\emptyset.$  

\item  {\em The extreme-cycle types:} $\V_{[\alpha]}$ where $\alpha$ is an irrational path with $\alpha^0$ on cycles extreme in $R(\alpha^0),$  $\N_{v\infty},$ or $\N_{v\in B_H}$ where $v$ is an infinite emitter with $|\ra(\so^{-1}(v))\cap R(v)|>1.$    

\item  {\em The exclusive-cycle types:} $\N_c$ where $c$ is an exclusive cycle or $\N_{v\in B_H}$ where $v$ is an infinite emitter with $|\ra(\so^{-1}(v))\cap R(v)|=1.$ 
\end{enumerate}
\label{graded_simple} 
\end{theorem}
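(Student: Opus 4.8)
The plan is to reduce the statement about graded simple modules to the already-established classification of graded primitive ideals in Theorem \ref{graded_primitive_ideal}. The key observation is that if $M$ is a graded simple $L_K(E)$-module, then $\ann(M)$ is a graded ideal (since $M$ is graded) and, moreover, $\ann(M)$ is a graded primitive ideal by definition of graded primitivity. Hence $\ann(M) = I(H,S)$ for an admissible pair $(H,S)$ satisfying one of the four mutually exclusive conditions (3a)--(3d) of Theorem \ref{graded_primitive_ideal}. The bulk of the proof will then consist of exhibiting, for each of the four cases, a graded Chen module from the corresponding item (1)--(4) whose annihilator is exactly $I(H,S)$, and checking that the four families of annihilators are genuinely distinct so that the word ``exactly'' is justified.

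First I would handle case (3a): here $H = E^0 - R(\alpha^0)$ for a strictly decreasing infinite path $\alpha$ with $R(v) \subsetneq R(\alpha^0)$ for all $v \in E^0-H$, and $S = B_H$. Such an $\alpha$ is irrational (a strictly decreasing path cannot be tail equivalent to $ccc\ldots$), so by \cite[Proposition 3.6]{Roozbeh_Ranga} the module $\V_{[\alpha]}$ is graded, and by \cite[Lemma 3.2]{Ara_Ranga} its annihilator is $I(H, B_H)$ with $H = E^0-R(\alpha^0)$ — matching item (1). For case (3b), $H = E^0 - R(v)$ where $v$ emits no edges to $E^0-H$; if $v$ is a sink this is exactly $\N_v$ of type (3) in section \ref{subsection_list_of_Chen_modules} with $\ann(\N_v) = I(H,B_H)$ by \cite[Lemma 3.1]{Ara_Ranga}, and if $v$ is an infinite emitter then $\ra(\so^{-1}(v)) \cap R(v) = \emptyset$, so it is $\N_{v\emptyset}$ with $\ann(\N_{v\emptyset}) = I(H,B_H)$ by \cite[Proposition 2.4]{Ranga_simple_modules} — matching item (2). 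For case (3c), $v$ lies on a cycle extreme in $E^0-H$; if $S = B_H$ one takes the infinite-path module $\V_{[\alpha]}$ for $\alpha = vccc\ldots$ (an irrational path whose vertices lie on cycles extreme in $R(\alpha^0)$), and if $S = B_H - \{u\}$ with $u$ on the same extreme cycle, $u$ must be an infinite emitter with $|\ra(\so^{-1}(u)) \cap R(u)| > 1$, giving $\N_{u\in B_H}$ with $\ann(\N_{u\in B_H}) = I(H, B_H - \{u\})$; the case $\N_{v\infty}$ covers the remaining sub-possibility where the distinguished vertex is an infinite emitter with infinite range-intersection sitting on an extreme cycle — matching item (3). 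Finally, for case (3d), $v$ lies on an exclusive cycle $c$; if $S = B_H$ then Proposition \ref{proposition_N_c}(1) gives $\ann(\N_c) = I(H, B_H)$ with $H = E^0 - R(c^0)$, and if $S = B_H - \{u\}$ with $u$ on $c$, then $u$ is an infinite emitter with $|\ra(\so^{-1}(u)) \cap R(u)| = 1$ (since $c$ has no exit in $E^0-H$), giving $\N_{u\in B_H}$ — matching item (4).

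The uniqueness (``exactly one'') follows because the four cases (3a)--(3d) of Theorem \ref{graded_primitive_ideal} are themselves mutually exclusive: they are distinguished by whether $E^0-H$ has a strictly decreasing infinite path with strictly shrinking roots, a maximal vertex emitting nothing into $E^0-H$, a maximal vertex on an extreme cycle, or a maximal vertex on an exclusive cycle. Within each case the annihilator $I(H,S)$ determines $H$ and the parity of $S$ uniquely, so two graded Chen modules listed under the same item share their annihilator only in the harmless sense already flagged in Example \ref{example_more_than_one_Chen} (e.g. $\V_{[\alpha]}$ versus $\N_{v\infty}$ in the extreme-cycle case), while modules from different items never do. I would close by remarking that, conversely, every module in the list is graded simple — this is already recorded in section \ref{subsection_list_of_Chen_modules} for the Chen modules and in Proposition \ref{proposition_N_c}(2) for $\N_c$ — so the list is exhaustive and irredundant at the level of annihilators.

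The main obstacle I anticipate is the bookkeeping in case (3c): one must verify that every admissible pair $(H,S)$ with $v$ on a cycle extreme in $E^0-H$ is realized by one of $\V_{[\alpha]}$, $\N_{v\infty}$, or $\N_{v\in B_H}$, which requires a careful case analysis on whether the distinguished vertex $u$ of $B_H - S$ (when $S \neq B_H$) is a regular vertex on the extreme cycle or an infinite emitter, and matching $|\ra(\so^{-1}(u)) \cap R(u)|$ against the trichotomy $\emptyset$ / finite nonempty / infinite that indexes the infinite-emitter Chen modules. The analogous but easier check in case (3d) is where Proposition \ref{proposition_N_c}(5) does the heavy lifting of separating $\ann(\N_c)$ from all other annihilators, so the genuinely new content of the theorem is concentrated in confirming that $\N_c$ plugs precisely the gap left by the graded Chen modules of \cite{Chen, Ara_Ranga, Ranga_simple_modules} in the exclusive-cycle case with $S = B_H$.
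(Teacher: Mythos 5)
Your overall strategy is the same as the paper's: since $M$ is graded, $\ann(M)$ is a graded primitive ideal, so it equals $I(H,S)$ satisfying exactly one of (3a)--(3d) of Theorem \ref{graded_primitive_ideal}, and one then matches each case with a graded Chen module. Cases (3a), (3b), and (3d) are handled essentially as in the paper.

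There is, however, a genuine gap in your treatment of case (3c) with $S=B_H$. You propose the module $\V_{[\alpha]}$ for $\alpha=vccc\ldots$ and call this path irrational, but it is tail equivalent to $ccc\ldots$, hence rational by definition; consequently $\V_{[\alpha]}$ is \emph{not} graded (by \cite[Proposition 3.6]{Roozbeh_Ranga}, $\V_{[\alpha]}$ is graded exactly when $\alpha$ is irrational), so it is not a graded Chen module and in particular not of the type listed in item (3) of the theorem. The missing idea is precisely how the paper produces irrationality: since $c$ is extreme in $E^0-H$, it has an exit that returns, so there is a second cycle $d$ with $c^0\cap d^0\neq\emptyset$, and the interleaved path $\alpha=cdccddcccddd\ldots$ is irrational with $R(\alpha^0)=R(v)=E^0-H$ and $\alpha^0$ on cycles extreme in $R(\alpha^0)$; this $\V_{[\alpha]}$ is graded and has annihilator $I(H,B_H)=P$ by \cite[Lemma 3.2]{Ara_Ranga}. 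You cannot dodge this by falling back on $\N_{v\infty}$ or $\N_{v\in B_H}$, since in case (3c) with $S=B_H$ the graph may contain no infinite emitters at all (for instance, a finite graph with two intersecting cycles), so the irrational-path construction is indispensable. Repairing your case (3c) with this two-cycle interleaving brings your argument in line with the paper's proof; the remaining bookkeeping you describe (which infinite-emitter subtype occurs, and the mutual exclusivity of (3a)--(3d) giving the ``exactly one'' claim) is handled the same way the paper handles it.
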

\begin{proof}
Assume that $M$ is a graded simple module and let $P=I(H, S)=\ann(M)$. Then $P$ is graded primitive, so exactly one  condition of part (3) of Theorem \ref{graded_primitive_ideal} holds. If it is (3a), then $\ann(\V_{[\alpha]})=P=\ann(M)$ for a path $\alpha$ as in (3a). If it is (3b), then $P=\ann(\N_v)$ or $P=\ann(\N_{v\emptyset})$ for a vertex $v$ as in (3b). 
If it is (3d) and $S=B_H,$ then $P=\ann(\N_c^v)$ for a vertex $v$ as in (3d). If it is (3d) and $|B_H-S|=1,$ then $B_H-S=\{u\}$ for some $u\in B_H$ as in (3d) and  $P=\ann(\N_{u\in B_H}).$

If $P$ is as in part (3c) of Theorem \ref{graded_primitive_ideal}, then there is $v$ on a cycle $c$ extreme in $E^0-H$ such that $E^0-H=R(v).$ If $S=B_H,$ then let $d$ be another cycle such that $c^0\cap d^0$ is nonempty ($c$ has exits which connect back to $c,$ so there is at least one such cycle $d$). The path $\alpha=cdccddcccddd\ldots$ is irrational and $R(v)=R(\alpha^0),$ so that $H=E^0-R(v)=E^0-R(\alpha^0).$
Since $\alpha$ does not end in an exclusive cycle, we have that $\ann(\V_{[\alpha]})=I(H, B_H)=I(H,S)=P.$ In addition, if there is also an infinite emitter $u$ such that $R(u)=R(v)$ and  $|\ra(\so^{-1}(v))\cap R(v)|=\infty,$ then we also have that $\ann(\N_{u\infty})=I(H, B_H)=P.$ If $S=B_H-\{u\}$ for some $u,$ then there is a cycle which contains both $u$ and $v$ by Theorem \ref{graded_primitive_ideal}, so that $R(u)=R(v)=E^0-H$ and $\ann(\N_{u\in B_H})=I(H, B_H-\{u\})=P.$
\end{proof}
 
By Example \ref{example_more_than_one_Chen}, it is possible that 
two graded Chen modules of different types have the same annihilator $I(H,S)$. By the proof of Theorem \ref{graded_simple}, this can happen only when one of the Chen modules is of the type $\V_{[\alpha]}$ where $\alpha$ is an infinite irrational path with vertices on cycles which are extreme in $E^0-H$ and the other is of the type $\N_{v\infty}$ for some infinite emitter $v$ with $|\ra(\so^{-1}(v))\cap R(v)|=\infty$ and such that $R(v)=E^0-H.$ By \cite[Proposition 2.8]{Ranga_simple_modules}, the modules of the form $\V_{[\alpha]}$ and $\N_{v\infty}$ are not isomorphic.

\section{Discussion on some results from   \texorpdfstring{\cite{Ranga_prime}}{TEXT} and \texorpdfstring{\cite{Roozbeh_Ranga}}{TEXT}}\label{section_corrections}

\subsection{\texorpdfstring{\cite[{\bf Theorem 4.3}]{Ranga_prime}}{TEXT}}
\label{subsection_Ranga}
\cite[Theorem 4.3]{Ranga_prime} states that an ideal $P$ of $L_K(E)$ with $H=P\cap E^0$ is primitive if and only if  
$P$ satisfies one of the following. 
\begin{enumerate}[\upshape(1')]
\item $P$ is a non-graded prime ideal. 
 
\item $P$ is a graded prime ideal of the form $I(H, B_H -\{u\})$ for some $u\in B_H.$ 

\item $P$ is a graded ideal of the form $I(H, B_H)$ and $E ^0-H$ is downwards directed, has Condition (L), and CSP holds. 
\end{enumerate}

We show that there is a graded ideal $P$ such that (3') holds but $P$ is not primitive.

In \cite[Proposition 3.5]{Gene_Bell_Ranga},  an acyclic graph, $F$ in the example below, was exhibited. The graph $F$ is such that $F^0$ is downwards directed but it does not have CSP. We add a sink $w$ to $F$ and an edge from each vertex of $F^0$ to $w$ to obtain a graph $E$ such that $E^0=R(w).$ Thus, $E^0$ has CSP and, hence, $E^0-\{w\}$ has CSP. However, $E^0-\{w\}$ does not have ICSP.

\begin{example} Let $F(\Rset)$ denote the set of finite subsets of the set of real numbers $\Rset.$ Let $F$ be the graph with vertices 
$v_X$ for $X\in F(\Rset)$ and edges $e_{XY}$ for $X,Y\in F(\Rset)$ such that $X\subsetneq Y$ and with the source and range maps given by $\so(e_{XY})=v_X$ and $\ra(e_{XY})=v_Y.$  

Let $E$ be the graph with vertices $F^0\cup \{w\},$ with edges $F^1\cup \{e_X\mid X\in  F(\Rset)\},$ with the source and the range maps the same as in $F$ on $F^1,$ and with $\so(e_X)=v_X$ and $\ra(e_X)=w.$

The set $H=\{w\}$ is hereditary and saturated with $B_H=\emptyset.$ The set $F^0=E^0-H$ is downwards directed, has Condition (L), and $\{w\}$ is such that $F^0\subseteq R(\{w\}),$ so CSP holds for $F^0.$ Hence, (3') holds for $P=I(\{w\}, \emptyset)$. 
However, $F^0$ does not have ICSP because no countable set of vertices $C$ such that $F^0\subseteq R(C)$ can be found within $F^0$ (as it was pointed out in \cite{Gene_Bell_Ranga}). Indeed, assuming that such $C$ exists, we would have that every finite subset of $\Rset$ is contained in a fixed countable subset $A$ of $\Rset.$ This would imply that $\Rset-A$ does not have any finite subsets which is a contradiction since $\Rset-A$ is not empty. 
This shows that $E^0-H=(E/(H, \emptyset))^0$ does not have CSP, so $P$ is not primitive by \cite[Theorem 4.7]{Gene_Bell_Ranga}. Note that $P$ is not primitive also by 
Theorem \ref{graded_primitive_ideal} and Corollary \ref{corollary_primitive_ideal}.  
\label{example_CSP_not_ISCP}
\end{example}

If $(H,S)$ is an admissible pair of a graph $E,$ then the assumption that $E^0-H$ has CSP does not imply that the quotient graph $E/(H,S)$ also has CSP. Lemma \ref{lemma_quotient_graph} shows that one needs to require that $E^0-H$ has ICSP to conclude that $E/(H,S)$ has CSP. As the next paragraph shows, the statement of \cite[Theorem 4.3]{Ranga_prime} becomes correct if (3') is replaced by (3'') where (3'') denotes (3') with CSP replaced by ICSP. This would correct the argument on line 16 of page 82 of \cite{Ranga_prime} where CSP only, not ICSP, of $E^0-H$ is claimed to be sufficient for CSP of $(E/(H, B_H))^0.$ 
We also note that the statement of \cite[Theorem 4.3]{Ranga_prime} becomes correct if $E^0-H$ in condition (3') is replaced by $(E/(H,S))^0.$

If $P$ is non-graded, then $P$ is primitive if and only if (1') holds by \cite[Theorem 4.3]{Ranga_prime}.
If $P$ is graded and primitive, then $L_K(E)/P$ is primitive, so $(E/(H,S))^0$ is downwards directed, has CSP and Condition (L) by \cite[Theorem 4.7]{Gene_Bell_Ranga}. If $S=B_H,$ then $E^0-H$ is
downwards directed, has ICSP and Condition (L) by Lemma \ref{lemma_quotient_graph} and the proof of Corollary \ref{corollary_graded_primitive_and_primitive}, so (3'') holds. If $u\in B_H-S,$ then $B_H-S=\{u\}$ since  $(E/(H,S))^0$ is downwards directed. In addition, $P$ is graded and prime, so $P$ is graded prime and (2') holds. 
Conversely, if (2') holds, then the primness of $P$ implies that $E^0-H$ is downwards directed, so $E^0-H=R(u)$ holds 
and $E^0-H$ has ICSP.  Hence, both (2') and (3'') imply that $P$ is graded primitive by condition (2) of Theorem \ref{graded_primitive_ideal} and primitive by Corollary \ref{corollary_graded_primitive_and_primitive}.

\subsection{\texorpdfstring{\cite[{\bf Definition 3.1 and Example B}]{Roozbeh_Ranga}}{TEXT}}\label{subsection_RangaRoozbeh_branching}
In \cite[Definition 3.1]{Roozbeh_Ranga}, a branching system was defined as a saturated and perfect branching system in the sense used in  \cite{Chen} and in this paper. In \cite[Example B of \S 3]{Roozbeh_Ranga}, the module $\N_v$ for an infinite emitter $v$ was introduced using the same branching system we considered in section \ref{subsection_list_of_Chen_modules} (recall that $X$ consists of paths which end in $v$). However, since $v$ is not equal to $p$ for any path $p$ of positive length which starts and ends at $v$, $v\in X_v$ and $v\notin X_e$ for any $e\in \so^{-1}(v)$. Thus, $\bigcup_{e\in \so^{-1}(v)}X_e\subsetneq X_v$ showing that $X$ is not perfect.

\subsection{\texorpdfstring{\cite[{\bf Definitions 3.2 and 3.3}]{Roozbeh_Ranga}}{TEXT}}\label{subsection_RangaRoozbeh_N_vc}
Let $c$ be a cycle without exits and $v$ be a vertex on $c.$ In \cite[Definition 3.2]{Roozbeh_Ranga} the authors of \cite{Roozbeh_Ranga} introduce a branching system by considering the set $Z$ defined by \[Z=\{pq^*\mid p,q\mbox{  are paths and }\so(q)=v\},\] with $Z_w=\{pq^*\in Z\mid \so(p)=w\}$ for $w\in E^0,$ $Z_e=\{pq^*\in Z\mid e$ is the first edge of $p\}$ and $\sigma_e: Z_{\ra(e)}\to Z_e$ given by $pq^*\mapsto epq^*$ for $e\in E^1.$ 
Without specifying that the elements $pq^*$ are considered as elements of $L_K(E),$ this is not a branching system since the axiom (4) fails. Indeed, $v$ is an element of $Z_v$ which is not in $Z_e$ for any $e\in \so^{-1}(v).$ 
This would create problems with the $L_K(E)$-module structure as the following example shows. 
\begin{example}
Let $E$ be the graph 
$\xymatrix{\bullet^v\ar@(ur,dr)^e}.$ Let $e^0$ denote $v$ and $e^{-n}$ denote $e^*$ listed $n$ times for a positive integer $n.$ By definition of $Z,$ $Z$ consists of the elements of the form $e^me^{-n}$ where $m$ and $n$ are nonnegative integers, $Z=Z_v,$ $Z_e=\{e^me^{-n}\in Z\mid m>0\},$ and $\sigma_e: Z_v\to Z_e$ is the map $e^me^{-n}\mapsto e^{m+1}e^{-n}.$ The first three axioms of the branching system hold, but we have that $v\in Z_v$ and $v\notin Z_e,$ so axiom (4) fails. If $M(Z)$ is the $K$-vector space with basis $Z,$ let us consider the action of $e^*$ introduced as \cite[Definition 3.3]{Roozbeh_Ranga} by  
$e^*\cdot x=\sigma_e^{-1}(x)$ if $x\in Z_e$ and 0 otherwise. Then we have that $e^*\cdot v=0$ since $v\notin Z_e.$ However, as $ee^*=v$ in the algebra $L_K(E),$ we have that 
\[v=v\cdot v=ee^*\cdot v=e\cdot(e^*\cdot v)=e\cdot 0=0\]
which is a contradiction with the assumption that $v$ is a basis element. 
\label{example_not_branching} 
\end{example}

Changing the definition of $Z$ to   
\[Z=\{pq^*\in L_K(E)\mid p,q \mbox{ are paths and }\so(q)=v\},\]
also create problems since 0 is in $Z$ if $E$ contains paths $p$ and $q$ such that $\ra(p)\neq \ra(q)$ (for example, in the case when $E$ has more than one vertex). To avoid having zero in a basis set, one can require that $\ra(p)=\so(q)$ in the definition of $Z.$ Ultimately, if $Z$ is defined by 
\[Z=\{pq^*\in L_K(E)\mid p,q \mbox{ are paths with }\ra(p)=\ra(q)\mbox{ and }\so(q)=v\},\]
then $Z$ is the same as the set $X$ we introduced in Definition \ref{definition_branching_for_N_c}. As a consequence, if the original definition of $Z$ is changed to this last version,  then the module $\N_{vc}=M(Z)$ from \cite{Roozbeh_Ranga} is the same as $\N_c^v=M(X).$

\subsection{\texorpdfstring{\cite[{\bf Theorem 3.4}]{Roozbeh_Ranga}}{TEXT}}\label{subsection_RangaRoozbeh_graded_primitive}
\cite[Theorem 3.4]{Roozbeh_Ranga} states that an ideal $P$ with $H=E^0\cap P$ is graded primitive if and only if either (i) $P$ is a graded prime ideal of the form $I(H, B_H)$ such that $E/(H, B_H)$ satisfies Condition (L) and has CSP or (ii) $P$ is prime and $P=I(H, B_H-\{u\})$ for some $u\in B_H$ such that $E^0-H=R(u).$ This may seem to contradict Theorem \ref{graded_primitive_ideal}. However, ``graded primitive'' in \cite{Roozbeh_Ranga} is used in the sense ``graded and primitive'', not in the sense used in this paper. With this in mind, \cite[Theorem 3.4]{Roozbeh_Ranga} and Theorem \ref{graded_primitive_ideal} do not contradict each other. In fact, \cite[Theorem 3.4]{Roozbeh_Ranga} also follows from Corollary \ref{corollary_primitive_ideal}. 

\subsection{\texorpdfstring{\cite[{\bf Theorem 3.5}]{Roozbeh_Ranga}}{TEXT}}\label{subsection_RangaRoozbeh_annihilator}
If $c$ is a cycle with no exits and $H=E^0-R(c^0),$ part (2) of \cite[Theorem 3.5]{Roozbeh_Ranga} states that  $\ann(\N_{vc})$ is $I(H, \emptyset).$ If we assume that $\N_{vc}$ is defined so that none of the issues of section \ref{subsection_RangaRoozbeh_N_vc} are present, then it matches the module $\N_c^v.$ The following example shows that the set $S$ from $\ann(\N_c^v)=I(H, S)$ can be nonempty. In particular, $S=B_H$ by Proposition \ref{proposition_N_c}.

\begin{example}
Let $E$ be the graph 
$\;\xymatrix{\bullet^v \ar@(ul,dl)_c  & \bullet^u\ar[l]_e
\ar@{.} @/_1pc/ [r] _{\mbox{ } } \ar@/_/ [r] \ar [r] \ar@/^/ [r] \ar@/^1pc/ [r] &\bullet^w  }.$ Then $R(v)=\{u, v\}$ and for $H=E^0-R(v)=\{w\},$ $B_H=\{u\}.$ The annihilator $I(H,S)$ of $\N^v_c$ contains $u^H=u-ee^*.$ Indeed, since 
an element of $X_u$ has the form $ec^n$ for $n\in \Zset$ where $c^{-1}$ stands for $c^*$ and $c^0$ stands for $v,$ we have that
$(u-ee^*)\cdot ec^n=ec^n-e\cdot c^n=ec^n-ec^n=0$ for every $n\in \Zset.$ So, $S=\{u\}\neq \emptyset.$ Note that $\ann(\N^v_c)=I(\{w\}, \{u\})$ also follows from Proposition \ref{proposition_N_c}. 
\label{example_RoozbehRanga_Thm_3_5}
\end{example}

The problem in the proof of \cite[Theorem 3.5]{Roozbeh_Ranga} is on line 14 of page 473: the sum in the formula on this line should be taken over $e\in \so^{-1}(v), \ra(e)\notin H$, not over  $e\in \so^{-1}(v), \ra(e)\in H,$ by the definition of the set $B_H.$ With the correct subscript, the relation $u^Hp=0$ holds instead of  $u^Hp=p.$

\end{document}